\begin{document}

\newtheorem{thm}{Theorem}[section]
 \newtheorem{cor}[thm]{Corollary}
 \newtheorem{lem}[thm]{Lemma}
 \newtheorem{prop}[thm]{Proposition}
\newtheorem{claim}[thm]{Claim}
 \newtheorem{df}[thm]{Definition}
 \newtheorem{rem}[thm]{Remark}
 \newtheorem{ex}{Example}
 \numberwithin{equation}{section}

\newcommand\<{\langle}
\renewcommand\>{\rangle}

\author{P. Germain}

\title{Global existence for coupled Klein-Gordon equations with different speeds}

\maketitle

\begin{abstract}
Consider, in dimension 3, a system of coupled Klein-Gordon equations with different speeds, and an arbitrary quadratic nonlinearity. We show, for data which are small, smooth, and localized, that a global solution exists, and that it scatters. The proof relies on the space-time resonance approach; it turns out that the resonant structure of this equation has features which were not studied before, but which are generic in some sense.
\end{abstract}

\tableofcontents

\section{Introduction}

\subsection{Presentation of the problem}

The aim of this paper is to prove global existence and scattering for the system of Klein-Gordon equations
\begin{equation}
\label{mandarin}
\left\{ \begin{array}{l} \square u^1 + u^1 = Q^1(u^1,u^c) \\
\square_c u^c + u^c = Q^c(u^1,u^c) \\
(u^1,\partial_t u^1)(t=0) = (u^1_0,u^1_1) \\ (u^c,\partial_t u^c)(t=0) = (u^c_0,u^c_1) \\ \end{array} \right.
\end{equation}
where $u^1$, $u^c$, are real functions of $(t,x) \in \mathbb{R}\times \mathbb{R}^3$, we denoted
$$
\square \overset{def}{=} \partial_t^2 - \Delta\;\;\;\mbox{and}\;\;\;\square_c \overset{def}{=} \partial_t^2 - c^2 \Delta,
$$
and we make the assumption that $Q^1$ and $Q^c$ vanish quadratically
$$
Q^1(u,v),Q^c(u,v) = O(|u|^2+|v|^2).
$$
The data $u^1_0,u^1_1,u_0^c,u^c_1$ will be chosen small, smooth and localized in space.

This equation models the nonlinear interaction of two types of Klein-Gordon waves, one propagating fast, the other slowly. If $c$ is very large with respect to 1, it is for instance a toy model for the Euler-Maxwell equation describing plasmas; in that case, the fast waves would be electromagnetic, and the slow waves accoustic\footnote{To be slightly more precise, the linearization of the Euler-Maxwell equation around an equilibrium state of constant density, zero velocity, and zero electromagnetic fields is essentially given by a system of Klein-Gordon equations with very different velocities; as for the nonlinearity, it is quadratic and higher order, but not semilinear as in the present paper.}.

Another source of interest of this equation is mathematical: the space-time resonant structure of~(\ref{mandarin}) has features which have not been studied yet, and which are representative of a large class of equations: this is detailed in Section~\ref{bruant}.

\subsection{Known results on global solutions of semilinear wave and Klein-Gordon equations}

We shall quickly review some results on global solutions of wave and Klein-Gordon equations. We shall focus on {\bf the equation set in $\mathbb{R}^{3+1}$}, and on {\bf quadratic nonlinearities $Q= Q(u,\partial u)$}.

\subsubsection{Scalar Klein-Gordon equation}
\label{chardonneret1}
It is important to notice first that in dimension 3 and for the Klein-Gordon equation, 2 is the Strauss exponent. This means that a quadratic nonlinearity (as opposed to any larger power $|u|^{2+\epsilon}$) is barely too weak for dispersive and Strichartz estimates alone to give global existence and scattering. In other words, resonances need to be taken into account.

Global existence in the case of small and localized data for $\square u + u = Q(u,\partial u)$ was proved independently by Shatah~\cite{S} (by the normal form method) and by Klainerman~\cite{K1} (by the vector fields method).

Another line of research is the study of weakly decaying data, which somehow corresponds to ignoring space resonances (see Section~\ref{secspacetime}); we mention in particular Delort and Fang~\cite{DF}, where $X^{s,b}$ methods are employed.

\subsubsection{Scalar wave equation}
\label{chardonneret2}
Still in dimension 3, this problem is much more delicate than Klein-Gordon: indeed, the decay given by the linear wave equation is $\frac{1}{t}$, which makes a quadratic nonlinearity short-range.

It has been observed that the properties of the equation $\square u = Q(u, \partial u)$ depend crucially on the structure of $Q$. If $Q(u, \partial u) = (\partial_t u)^2$ or $u \partial_t u$, John~\cite{J2} was able to prove finite time blow up; but for $Q$ being the null form $(\partial_t u)^2 - (\nabla u)^2$, Klainerman~\cite{K2} and Christodoulou~\cite{C} obtained global solutions for small and localized data. Klainerman relied on the vector fields method, whereas Christodoulou used a change of independent variables (conformal mapping).

\subsubsection{Systems of wave equations}
\label{chardonneret3}
Consider the system $\square_{c_i} u_i = Q_i(\partial u)$, where $u = (u_i)$ is a vector, we assume the velocities to be different, $c_i \neq c_j$ if $i \neq j$, and $Q_i$ is a quadratic polynomial, $Q_i = \sum_{\alpha \beta j k} a^{\alpha \beta jk}_i \partial_\alpha u_i \partial_\beta u_j$. 

Yokoyama~\cite{Y} (and Sideris and Tu in the quasilinear case~\cite{ST}) proved that small and localized data yield a global solution if the interaction of a wave with itself has a null form structure; or more precisely if for each $i$ the quadratic form $\sum_{\alpha \beta} a^{\alpha \beta ii}_i \partial_\alpha \cdot \partial_\beta \cdot$ is the null form of Christodoulou and Klainerman. His proof relies on the vector fields method. Notice that, in particular, the interaction between waves $u_i$, $u_j$, with different speeds might be any polynomial depending only on $\partial u$.

An example of Ohta~\cite{O} shows that, if $Q$ is allowed to depend on $u$ (and not only its derivatives), then interactions between different waves might produce blow up in finite time. For more recent developments, see Katayama and Yokoyama~\cite{KY}.

\subsubsection{Systems of Klein-Gordon equations}
\label{chardonneret4}

\underline{Different masses.}
Systems of Klein-Gordon equations which have been considered have the form $\square u_i + m_i^2 u_i = Q_i(u,\partial u)$, where $u = (u_i)$ is a vector, the masses $m_i$ are assumed to be different, and $Q_i$ are quadratic polynomials.

In space dimension 3, Tsutsumi~\cite{T} studied a slightly different system, by considering the Maxwell Higgs equation in a neighbourhood of an equilibrium. He proved global existence for small and localized data; see also Hayashi, Naumkin and Wibowo~\cite{HNW}.

We also mention the case of dimension 2, since the resonance condition appears more clearly. Tsutsumi proved that, if for any $i,j$, $m_i \neq 2 m_j$, then a global solution exists for small and localized data. Finally, Delort, Fang, and Xue~\cite{DFX} were able to show that if for some $i,j$, $m_i = 2 m_j$, then the global existence result still holds if the nonlinearity has a null form structure.

\bigskip
\underline{Different velocities.} The equation~(\ref{mandarin}), which is the subject of the present article, is a system of Klein-Gordon equations with different velocities. It might be surprising that this case has remained unsolved, whereas, for instance, systems of wave equations (whose decay is much lower than for Klein-Gordon) with different velocities could be treated. We will argue in the following subsection that the crucial concept is that of space-time resonances; and that equation~(\ref{mandarin}) has new features in this respect.

\subsection{Space-time resonances}

\label{secspacetime}

We first discuss briefly the concept of space-time resonance, introduced in Germain, Masmoudi and Shatah~\cite{GMS1}~\cite{GMS2}~\cite{GMS3}; we later explain how it sheds light on the already mentioned results.

\subsubsection{General presentation}

\underline{Transformation of the equation}
In order to present the idea of space-time resonance, consider a fairly general scalar quadratic nonlinear dispersive equation
$$
\left\{ \begin{array}{l} 
i \partial_t u + P(D) u = Q(u,\bar u) \\ u(t=0) = u_0, \end{array} \right.
$$
where $P(D)$ is a real Fourier multiplier, and $Q(u,\bar u)$ is either $u^2$, or $\bar u ^2$ or $u \bar u$. Switching to the unknown function $f = e^{-itP(D)} u$ (the ``profile''), and to the Fourier side, Duhamel's formula can be written
\begin{equation}
\label{pic}
\widehat{f}(t,\xi) = \widehat{u_0}(\xi) + \frac{1}{(2\pi)^{d/2}} \int_0^t \int e^{i s\phi(\xi,\eta)} \widehat{f}(s,\eta)  \widehat{f}(s,\xi-\eta) \,d\eta \,ds,
\end{equation}
(where we denoted for simplicity indifferently $\widehat{f}$ for $\widehat{f}$ or $\widehat{\bar f}$) with
$$
\phi(\xi,\eta) = P(\xi) \pm P(\eta) \pm P(\xi-\eta),
$$
where the signs $--$, $++$ and $+-$ correspond respectively to $Q$ being $u^2$, $\bar u^2$, and $u \bar u$. 

\bigskip
\underline{The resonant sets}
Viewing the integral in~(\ref{pic}) from the point of view of the stationary phase lemma, the critical sets are those where $s\phi$ is not oscillating in $s$, $\eta$, or even worse, both: 
\begin{equation*}
\begin{split}
& \mathcal{T} \overset{def}{=} \{ (\xi,\eta) \;\mbox{such that}\; \phi(\xi,\eta) = 0 \} \;\;\;\;\mbox{(``time resonances'')} \\
& \mathcal{S} \overset{def}{=} \{ (\xi,\eta) \;\mbox{such that}\;\partial_\eta \phi(\xi,\eta) = 0 \} \;\;\;\;\mbox{(``space resonances'')}\\
& \mathcal{R} \overset{def}{=} \mathcal{S} \cap \mathcal{T} \;\;\;\;\mbox{(``space-time resonances'')}.
\end{split}
\end{equation*}
The central idea is that the sets $\mathcal{T}$, $\mathcal{S}$, and to a greater extent $\mathcal{R}$, are the obstructions to a linear behaviour of $u$, for large time, and small data.

\bigskip

Of course, similar manipulations can be performed for any nonlinear dispersive equation (see below for an example); this leads to a formula similar to~(\ref{pic}), and once $\phi$ is known, the sets $\mathcal{T}$, $\mathcal{S}$ and $\mathcal{R}$ can be defined as above.

Finally, the space and time resonant sets have a physical interpretation: $\mathcal{T}$ corresponds to bilinear interactions of plane waves which are resonant in the classical sense; $\mathcal{S}$ corresponds to bilinear interactions of wave packets whose group velocities are equal.

\bigskip \underline{The method} The method which we apply is straightforward: perform a (time-dependent) cut-off in the $(\xi,\eta)$ space in order to distinguish three regions. Away from $\mathcal{T}$, integrate by parts in $s$ (which amounts to a normal form transform). Away from $\mathcal{S}$, integrate by parts in $\eta$ (this is similar to the vector fields method). There remains a neighbourhood of $\mathcal{R}$; it should shrink with $t$, and one has to take advantage of the smallness of this set.

\subsubsection{An interpretation of the already mentioned results}

The wave as well as the Klein-Gordon equation can be described as a sum of a $+$ and a $-$ wave; the dispersion relations are
\begin{equation*}
\begin{split}
& P(\xi) = \pm c |\xi|\;\;\;\;\mbox{(linear wave equation with velocity $c$)} \\
& P(\xi) = \pm \sqrt{m^2 + c^2 |\xi|^2} \;\;\;\;\mbox{(linear Klein-Gordon equation with mass $m$ and velocity $c$)}
\end{split}
\end{equation*}
Then one can define $\phi$ as above for each of the possible bilinear interactions. For instance, in the case of the system $\square_{c_i} u_i = Q_i(\partial u)$, one must consider interactions between $+$ or $-$ linear waves travelling at different velocities. It leads to
$$
\phi^{\pm, \pm,\pm}_{ijk} = \pm c_i |\xi| \pm c_j |\eta| \pm c_k |\xi-\eta|;
$$
resonant sets are then defined in an obvious way.

The space-time resonances approach that was just sketched gives a possible interpretation of the results mentioned above.
\begin{itemize}
\item In the case of the scalar Klein-Gordon equation (Subsection~\ref{chardonneret1}), $\mathcal{R}=\emptyset$; that is why the normal form method can be applied globally.
\item For the scalar wave equation (Subsection~\ref{chardonneret2}), $\mathcal{R}$ is very large, which is an obstruction to global existence and scattering. This explains the role of the null form structures: it is precisely for these nonlinearities that the interaction term vanishes on $\mathcal{R}$.
\item Systems of wave equation with different speeds (Subsection~(\ref{chardonneret3}) only have space-time resonances in the following configuration: two linear waves corresponding to a given velocity $c_i$ interact to give a third linear wave also at velocity $c_i$. Thus a null form is only needed for this kind of interactions.
\item Finally, for Klein-Gordon equations with different masses (Subsection~(\ref{chardonneret4})), space-time resonances may occur only in case $m_i = 2m_j$. For any of these resonant interactions, one observes that $\partial_\xi \phi$ vanishes on $\mathcal{R}$, which helps in controlling the nonlinearity: see~\cite{GMS2} and~\cite{GMS3} for instances of this phenomenon. It might explain why global existence can be obtained in dimension 3 even in the presence of these resonances.
\end{itemize}

\subsubsection{Application to our problem}

For the problem which is the subject of this article (equation~(\ref{mandarin})), one needs to define several phase functions corresponding to all the possible interactions. They read
$$
\phi^{k,\ell, m}_{\epsilon_0,\epsilon_1,\epsilon_2}(\xi,\eta) \overset{def}{=} \epsilon_0 \< \xi \>_k - \epsilon_1 \< \eta \>_\ell - \epsilon_2 \<\xi-\eta\>_m,
$$
where $k,l,m$ equal $1$ or $c$ and $\epsilon_0,\epsilon_1,\epsilon_2$ equal $+$ or $-$ (see Section~\ref{secfirstrec} for the full derivation). The associated time, space, and space-time resonant sets are
\begin{equation}
\begin{split}
& \mathcal{T}_{\epsilon_0,\epsilon_1,\epsilon_2}^{k,\ell,m} \overset{def}{=} \{ (\xi,\eta) \;\mbox{such that} \;\phi^{k,\ell, m}_{\epsilon_0,\epsilon_1,\epsilon_2} = 0 \} \\
& \mathcal{S}_{\epsilon_0,\epsilon_1,\epsilon_2}^{k,\ell,m} \overset{def}{=} \{ (\xi,\eta) \;\mbox{such that} \;\partial_\eta \phi^{k,\ell, m}_{\epsilon_0,\epsilon_1,\epsilon_2} = 0 \} \\
& \mathcal{R}_{\epsilon_0,\epsilon_1,\epsilon_2}^{k,\ell,m} \overset{def}{=} \mathcal{T}_{\epsilon_0,\epsilon_1,\epsilon_2}^{k,\ell,m} \cap \mathcal{S}_{\epsilon_0,\epsilon_1,\epsilon_2}^{k,\ell,m}.
\end{split}
\end{equation}

We will see in Section~\ref{secquadres} that space time resonances occur for some interactions. Then $\mathcal{R}_{\epsilon_0,\epsilon_1,\epsilon_2}^{k,\ell,m}$ has dimension 2 and is of the form
$\{ |\eta|=R\;,\;\xi = \lambda \eta\}$ for real numbers $R$ and $\lambda$. Furthermore (in general), $\partial_\xi \phi$ does not vanish on $\mathcal{R}$.

Thus, known methods of proof do not seem to apply here; a new approach is needed, which will be explained in the next section.

\subsection{Notations}

We use the following notations: 
\begin{itemize}
\item Japanese brackets: $\<x\> \overset{def}{=} \sqrt{1 + x^2} \;\;\;\mbox{and}\;\;\; \<x\>_c \overset{def}{=} \sqrt{1 + c^2 x^2}$.
\item Fourier transform: $[ \mathcal{F} f](\xi)  = \widehat{f}(\xi) \overset{def}{=} \frac{1}{(2\pi)^{d/2}} \int e^{-ix\xi} f(x) \,dx$.
\item Fourier multiplier: $\mathcal{F}(m(D)f)(\xi) \overset{def}{=} m(\xi) \widehat{f}(\xi)$.
\item Pseudo-product operator with symbol $m$: $T_m(f,g) \overset{def}{=} \mathcal{F}^{-1} \int m(\xi,\eta) \widehat{f}(\eta) \widehat{f}(\xi-\eta) \,d\eta$.
\item Inhomogeneous Sobolev spaces: $\|u\|_{H^s} \overset{def}{=} \|\<D\>^s u \|_2 \;\;\;\;\mbox{and}\;\;\;\;\|u\|_{W^{s,p}} \overset{def}{=} \|\<D\>^s u \|_p$.
\item Space time norms: for instance $L^p W^{s,q}$ stands for $L^p_t([0,\infty),W^{s,q}_x)$.
\item $\delta$-neighbourhood of $E$: $B_\delta(E) \overset{def}{=} \left\{ x \;\mbox{such that} \operatorname{dist}(x,E) \leq \delta \right\}$.
\item Inequalities with implicit constants: $A \lesssim B$ (respectively $A \gtrsim B$) indicates that, for a constant $C$ (depending on the context), $A \leq C B$ (respectively $A \geq CB$). Finally $A\sim B$ if $A \lesssim B$ and $B\lesssim A$.
\end{itemize}

\section{Main result and ideas of the proof}

\subsection{Outcome, source, and separation of resonances}

We introduce new concepts which will be crucial when dealing with the system~(\ref{mandarin}).

\begin{df} A frequency $X$ is called the \textit{outcome} of a space-time resonance if for some $\eta$, $(X,\eta)$ belongs to $\cup_{k,\ell,m,\epsilon_0,\epsilon_1,\epsilon_2} \mathcal{R}_{\epsilon_0,\epsilon_1,\epsilon_2}^{k,\ell,m}$. Loosely speaking, such a frequency might be created by a space-time resonant interaction.

A frequency $X$ is called the \textit{source} of a space-time resonance if either $(\xi,X)$ for some $\xi$, or $(X+\eta,\eta)$ for some $\eta$, belongs to $\cup_{k,\ell,m,\epsilon_0,\epsilon_1} \mathcal{R}_{\epsilon_0,\epsilon_1,\epsilon_2}^{k,\ell,m}$. Loosely speaking, such a frequency might have a space-time resonant interaction with another one.
\end{df}

Intuitively, space-time resonances can feed themselves if some sources of space-time resonances are also outcome of space-time resonances. If this does not happen, we say that the resonances are separated. 

\begin{df} The \textit{resonances separation condition} holds if no outcome frequency of a space-time resonance is also a source frequency of a space-time resonance.
\end{df}

\subsection{The main theorem}

\begin{thm}
\label{maintheorem}
Assume that the resonances separation condition is satisfied for~(\ref{mandarin}).
Then there exists an integer $N$, and a constant $\epsilon>0$ such that if 
$$
\left\|(u_0^1,u_0^c)\right\|_{H^N} + \left\|(u_1^1,u_1^c)\right\|_{H^{N-1}} + \||x|(u_0^1,u_0^c)\|_2 + \left\||x|\frac{1}{|D|}(u_1^1,u_1^c)\right\|_{2} < \epsilon,
$$
then there exists a global solution to~(\ref{mandarin}) such that
$$
\mbox{for any $t$},\;\;\;\left\|(u^1,u^c) \right\|_{H^N} \lesssim \epsilon\;\;\;\;\mbox{and} \;\;\;\;\left\|(u^1,u^c) \right\|_3 \lesssim \frac{\epsilon}{\sqrt{t}},
$$
and furthermore $u$ scatters in $H^N$: for $k=1,c$, there exist solutions $U^k$ of $\square U^k + U^k = 0$ with data in $H^N \times H^{N-1}$ such that
$$
\left\| u^1 - U^1 \right\|_{H^N} + \left\| u^c - U^c \right\|_{H^{N}} \longrightarrow 0\;\;\;\;\mbox{as $t \rightarrow \infty$}.
$$
\end{thm}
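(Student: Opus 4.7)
The plan is to apply the space-time resonance method of \cite{GMS1,GMS2,GMS3}, with a new ingredient needed to cope with the fact, announced at the end of the introduction, that $\partial_\xi\phi$ does not vanish on $\mathcal{R}$. I first diagonalize (\ref{mandarin}) into a first-order system for $v^k \overset{def}{=} (\partial_t + i\<D\>_k) u^k$ ($k\in\{1,c\}$), whose initial data are controlled precisely by the hypothesis; passing to the profiles $f^k(t)\overset{def}{=} e^{it\<D\>_k} v^k(t)$, Duhamel's formula expresses $\widehat{f^k}(t,\xi)$ as a sum of bilinear integrals of the form~(\ref{pic}) with oscillating factors $e^{is\phi^{k,\ell,m}_{\epsilon_0,\epsilon_1,\epsilon_2}}$. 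I then run a bootstrap, for $N$ sufficiently large, on the norm
\[
\|f\|_X \overset{def}{=} \sup_{t\geq 0}\sum_{k\in\{1,c\}}\Bigl(\|f^k(t)\|_{H^N} + \|xf^k(t)\|_{L^2}\Bigr).
\]
Standard Klein-Gordon dispersive estimates turn control of $X$ into $\|u^k\|_{L^\infty}\lesssim \<t\>^{-3/2}\|f\|_X$, which interpolates with the $L^2$ bound to yield the claimed $L^3$ decay; scattering in $H^N$ then follows once the Duhamel nonlinearity is shown to be $L^1_tH^N$.

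The bilinear estimate for each phase $\phi^{k,\ell,m}_{\epsilon_0,\epsilon_1,\epsilon_2}$ is performed via a time-dependent partition of unity of $(\xi,\eta)$-space into three pieces: $\{|\phi|\gtrsim\delta(t)\}$, $\{|\partial_\eta\phi|\gtrsim\delta(t)\}$, and the shrinking neighborhood $B_{\delta(t)}(\mathcal{R})$, with $\delta(t)\sim \<t\>^{-\alpha}$ for some $\alpha\in(0,1)$ to be optimized. On the first region, a normal form transform $e^{is\phi}=\frac{1}{i\phi}\partial_s e^{is\phi}$ converts the bilinear contribution into a boundary term (bounded by $\|f\|_X^2$) plus a cubic remainder whose time integral converges thanks to the $L^\infty$ decay of $u^k$. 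On the second region I integrate by parts in $\eta$, gaining a factor $\<s\>^{-1}$ at the cost of transferring $|x|$ onto one of the profiles, whose $L^2$ norm is part of $X$. The near-$\mathcal{R}$ contribution to the $H^N$ estimate is controlled purely by volume: since $\mathcal{R}$ has dimension $2$ in six-dimensional $(\xi,\eta)$-space, $|B_{\delta(t)}(\mathcal{R})|\lesssim \delta(t)^4$, which makes the remaining term integrable in $t$.

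The sole place where the resonances separation condition enters, and the principal obstacle of the proof, is the propagation of $\|xf^k\|_{L^2}$. Differentiating the Duhamel integral in $\xi$ produces an extra factor $s\,\partial_\xi\phi$ which, in contrast with the setting of \cite{GMS2,GMS3}, is of genuine order $s$ near $\mathcal{R}$. I split the offending term according to the location of the output frequency $\xi$: if $\xi$ lies at distance $\gtrsim\delta(t)$ from the outcome set of $\mathcal{R}$, then $(\xi,\eta)\notin B_{\delta(t)}(\mathcal{R})$ for every $\eta$ and one is reduced to the two previous cases, the factor $s\partial_\xi\phi$ being tamed by an additional integration by parts in either $s$ or $\eta$. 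If $\xi$ does lie within $\delta(t)$ of the outcome set, the separation condition forces neither input frequency to lie near a source of $\mathcal{R}$; consequently the two inputs may be estimated by the dispersive (unweighted) norm of $u^k$ — which is already controlled through the $H^N$ component of $X$ — and the $\delta(t)^4$ volume of $B_{\delta(t)}(\mathcal{R})$ supplies the time integrability needed to close the bootstrap. Thus separation globally forbids a resonance's output from acting as an input to another, which is exactly the mechanism that prevents the $\partial_\xi\phi$-induced growth of $xf^k$ at outcome frequencies from feeding itself through a further resonant interaction.
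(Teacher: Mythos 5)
Your overall plan — diagonalize, pass to profiles, split frequency space into neighborhoods of $\mathcal{T}$, $\mathcal{S}$, and a shrinking tube around $\mathcal{R}$, and identify the weighted $L^2$ estimate as the place where $s\,\partial_\xi\phi$ and the separation condition matter — agrees in outline with the paper. However, there are two genuine gaps that would prevent the bootstrap from closing as you have set it up.

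First, the norm you propose, $\|f\|_X = \sup_t\left(\|f\|_{H^N}+\|xf\|_{L^2}\right)$, is too strong to be propagated. Precisely because $\partial_\xi\phi$ does not vanish on $\mathcal{R}$, the term $s\,\partial_\xi\phi$ in the $\xi$-differentiated Duhamel integral produces genuine growth of $\|xf\|_2$; what the paper actually proves is $\|xf\|_2\lesssim\sqrt{t}$, and to compensate it must add to the bootstrap norm the intermediate decay estimates $\<t\>^{1/2+3\delta_1}\|u\|_{L^{(1/3-\delta_1)^{-1}}}$ and $\<t\>^{1-3\delta_1}\|\widetilde\chi_\mathcal{O}(D)u\|_{L^{(1/6+\delta_1)^{-1}}}$, as well as a uniformly bounded but weaker weighted estimate $\||x|^{1/8}\widetilde\chi_\mathcal{O}(D)f\|_2\lesssim1$ restricted to non-outcome frequencies. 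With your norm alone, the claimed bound $\|u^k\|_{L^\infty}\lesssim\<t\>^{-3/2}\|f\|_X$ would follow, but the a priori argument cannot recover $\|xf\|_2\lesssim1$ from the Duhamel formula; the bootstrap does not close.

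Second, in the crucial paragraph on separation, the logic runs in the wrong direction. If $\xi$ is near $\mathcal{O}$ and $(\xi,\eta)$ near $\mathcal{R}$, then by definition $\eta$ and $\xi-\eta$ \emph{are} near sources; what separation forbids is for them to also be near \emph{outcomes}. The correct consequence is therefore that the inputs lie in the region where $\widetilde\chi_\mathcal{O}$ is supported — which is useful only if one has propagated stronger estimates there, as the paper does. The sentence that then says the inputs can be ``estimated by the dispersive (unweighted) norm of $u^k$ — which is already controlled through the $H^N$ component of $X$'' is incorrect: control of $H^N$ gives no time decay whatsoever (only $\|u\|_{L^\infty}\lesssim1$ by Sobolev). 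To get decay of the non-outcome piece one needs a spatial-localization estimate on $\widetilde\chi_\mathcal{O}(D)f$, and providing such an estimate — the paper's $|x|^{1/8}$ bound at non-outcome frequencies and the $L^6$-ish decay derived from it — is precisely the new technical component that exploits separation. Your proposal omits it, and the volume factor $\delta(t)^4$ you invoke cannot by itself beat the $s\,\partial_\xi\phi\sim s$ factor in the weighted estimate.
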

When are resonances of~(\ref{mandarin}) separated?
We believe that it is always the case, except maybe for exceptional values of $c$, but this is not so easy to prove in general: one has to compare solutions of rather complicated algebraic equations.
However, the resonances separation condition can be very easily checked numerically for given values of $c$. In the Appendix, we present elementary computations which show that resonances are separated for $c=5$.

\subsection{Genericity of the problem}

\label{bruant}

First notice that a modification of the proof of Theorem~\ref{maintheorem} would give global solutions and scattering for nonlinear dispersive equations such that some general assumptions and furthermore: the decay of the linear part is the same as for Klein-Gordon; the nonlinearity is quadratic; and resonances are separated.

\bigskip

From the point of view of space-time resonances, the problem under study has features which seem generic, and which had not been examined before:
\begin{itemize}
\item The dimension of the space-time resonant set $\mathcal{R}$ is 2 in the 6-dimensional frequency space $(\xi,\eta)$. This is natural since, under nondegeneracy assumptions, $\mathcal{T}$ has dimension 5, and $\mathcal{S}$ has dimension 3.
\item The space-time resonant set $\mathcal{R}$ is not simply a linear subspace, as in the previous works~\cite{GMS1}~\cite{GMS2}~\cite{GMS3}. This can be traced back to the dispersion relation being nonhomogeneous.
\item The particular form of the space time resonant sets ($\{ |\xi| = R \;,\;\eta = \lambda \xi \}$ for real numbers $R$, $\lambda$) is to be expected (in general) if the dispersion relations $P(\xi)$ only depend on $|\xi|$.
\item The $\xi$ derivative $\partial_\xi \phi$ does not (generically in $c$) vanish on $\mathcal{R}$: this of course should be expected in general.
\end{itemize}

\subsection{Ideas of the proof}

Local existence is standard; thus the proof will consist in proving global a priori estimates, which will be stated in Section~\ref{gypaete}. They will be derived by splitting the $(\xi,\eta)$ space.

\subsubsection{Splitting the $(\xi,\eta)$ space.}
\label{secsplit}
Let us first write Duhamel's formula for one of the possible quadratic interactions occuring in~(\ref{mandarin}) (we drop all indices and inessential constants, see Section~\ref{secfirstrec} for the exact expression)
\begin{equation*}
\widehat{f}(\xi) = \widehat{f}(\xi) + \int_0^t \int e^{-i s \phi} \frac{\widehat{f} (\eta)}{\<\eta\>} \frac{ \widehat{f} (\xi-\eta)}{\< \xi-\eta \>} \, d\eta\,ds.
\end{equation*}
We split the above as follows
$$
\widehat{f}(\xi) = \widehat{f_0}(\xi) - \int_0^t \int e^{-is\phi} \left[\chi_\mathcal{T}^s + \chi_\mathcal{S}^s + \chi_\mathcal{R}^s \right] \frac{\widehat{f} (\eta)}{\<\eta\>} \frac{ \widehat{f} (\xi-\eta)}{\< \xi-\eta \>} \, d\eta\,ds
$$
where $\chi_\mathcal{T}^t$, $\chi_\mathcal{S}^t$, and $\chi_\mathcal{R}^t$ are time-dependent cut-off functions localizing respectively away from $\mathcal{S}$, away from $\mathcal{T}$, and in a neighbourhood of $\mathcal{R}$ which shrinks as $t \rightarrow \infty$. As already explained, the idea in order to obtain the desired estimates is to integrate by parts in $\eta$ the term containing $\chi_\mathcal{T}^t$; in $s$ the term containing $\chi_\mathcal{S}$; and to use the (increasing) smallness of the support of $\chi_\mathcal{R}^t$ in order to estimate the last term.

We describe next a few difficulties which one must face when applying this plan; and how they are overcome.

\subsubsection{Singularities of the cut-off functions, and the associated pseudo-product operators.} As $t$ converges to infinity, the cut-off functions become singular along the set $\mathcal{R}$, which is a surface with a non-zero curvature. One is thus led to estimating pseudo-product operators with singularities along surfaces with non-trivial geometry. Achieving a deep understanding of these operators seems quite difficult, but rough bounds can be obtained easily. Why is that enough? The idea is that the nonlinearity is exactly at the Strauss exponent, which means that only a very small gain (in time decay) is needed in order to close the estimates. Actually, even our very crude bounds suffice to give this gain.

\subsubsection{The high frequencies.} When performing the manipulations explained in Section~\ref{secsplit}, pseudo-products appear which are singular not only close to $\mathcal{R}$, but also at infinity, in the sense that they are not asymptotically homogeneous of degree 0 there. The idea is to treat the high frequencies by an argument independent of resonances (essentially, Strichartz estimates), which gives that $u$ can be controlled in $H^N$ for $N$ very large. We now split the quadratic term into frequencies less, or more, than $t^\delta$, where $\delta$ is very small. The piece containing frequencies less than $t^\delta$ can be controlled, for the singularity at $\infty$ of the pseudo-products is barely felt; and the piece containing frequencies more than $t^\delta$ can be estimated directly, if $N\delta$ is big enough.

\subsubsection{The separation of resonances condition.} It is used in the following way: let $\chi_\mathcal{O}$ localize smoothly to a (very small) neighbourhood of the frequencies which are the outcome of a space-time resonance, and let $\widetilde{\chi}_\mathcal{O}$ satisfy $\chi_\mathcal{O} + \widetilde{\chi}_\mathcal{O} = 1$. Then $\widetilde{\chi}_\mathcal{O}(D) u$ will satisfy stronger estimates than $\chi_\mathcal{O} (D)u$.

In order to prove the estimates on $\widetilde{\chi}_\mathcal{O}(D) u$, we use the fact 
$\widetilde{\chi}_\mathcal{O}(D) u$ does not see space-time resonances; or in other words $\widetilde{\chi}_\mathcal{O}(\xi) \chi_\mathcal{R}^t(\xi,\eta) = 0$, as follows immediately from the definitions.

In order to prove the estimates on $\chi_\mathcal{O}(D) u$, we use the separation of resonances condition. It implies that, as far as nearly space-time resonant frequencies are concerned, $\chi_\mathcal{O}(D) u$ can be written as a quadratic expression in $\widetilde{\chi}_\mathcal{O}(D) u$. This is helpful since the estimates on $\widetilde{\chi}_\mathcal{O}(D) u$ are stronger.

\section{Examination of the resonances}

\subsection{Duhamel's formula in Fourier space}

\label{secfirstrec}

It will first of all be convenient to assume that $Q^1$ and $Q^c$ are quadratic polynomials; treating terms of order three and higher is very easy, so we can forget about them. Thus in the following
$$
Q^1(u^1,u^c) = \alpha (u^1)^2 + \beta (u^c)^2 + \gamma u^1 u^c \;\;\;\;\;\;Q^c(u^1,u^c) = \delta (u^1)^2 + \epsilon (u^c)^2 + \zeta u^1 u^c.
$$
for some real constants $\alpha,\beta,\gamma,\delta,\epsilon,\zeta$.
Next we diagonalize the linear part of the equation by adopting the new unknown functions
\begin{equation*}
\begin{split}
u^k_{\pm} \overset{def}{=} \partial_t u^k \pm i \< D \>_k u^k \;\;\;\;\;\;\mbox{for $k=1$ or $c$}\\
\end{split}
\end{equation*}
which are associated to the profiles
$$
f^k_{\pm} \overset{def}{=} e^{\mp i t \<D\>_k} u_{\pm}
$$
and the initial data
$$
u^k_{\pm,0} = f^k_{\pm,0} \overset{def}{=} u^k_1 \pm i \< D \>_k u^k_0.
$$
Writing Duhamel's formula for u gives for $k=1,c$ and $\epsilon_0 = \pm$,
\begin{equation}
\label{pingouin}
\begin{split}
u^k_{\epsilon_0} = e^{i\epsilon_0 t \<D\>} u^k_{\epsilon_0,0} + \sum_{\epsilon_1,\epsilon_2 = \pm} \;\sum_{\ell, m = 1,c} a_{\epsilon_0, \epsilon_1, \epsilon_2}^{k,\ell,m} \int_0^t \int e^{i \epsilon_0 (t-s)\<D\>} \frac{u^\ell_{\epsilon_1}}{\<D\>_\ell} \frac{u^m_{\epsilon_2}}{\<D\>_m}\,ds
\end{split}
\end{equation}
where the $a_{\epsilon_0,\epsilon_1, \epsilon_2}^{k,\ell,m}$ are real coefficients (which can be expressed in terms of $\alpha,\beta,\gamma,\delta,\epsilon,\zeta$). Equivalently, the equations for the profiles in Fourier space are
\begin{equation*}
\begin{split}
\widehat{f^k_{\epsilon_0}}(\xi) = \widehat{f_{\epsilon_0,0}^k}(\xi) + \sum_{\epsilon_1,\epsilon_2 = \pm} \; \sum_{\ell, m = 1,c} \frac{a_{\epsilon_0, \epsilon_1, \epsilon_2}^{k,\ell,m}}{(2\pi)^{d/2}} \int_0^t \int e^{-i s \phi^{k, \ell, m}_{\epsilon_0, \epsilon_1, \epsilon_2}} \frac{\widehat{f^\ell_{\epsilon_1}} (\eta)}{\<\eta\>_\ell} \frac{ \widehat{f^m_{\epsilon_2}} (\xi-\eta)}{\< \xi-\eta \>_m} \, d\eta\,ds
\end{split}
\end{equation*}
where the phases are given by
$$
\phi^{k,\ell, m}_{\epsilon_0,\epsilon_1,\epsilon_2}(\xi,\eta) \overset{def}{=} \epsilon_0 \< \xi \>_k - \epsilon_1 \< \eta \>_\ell - \epsilon_2 \<\xi-\eta\>_m.
$$

\subsection{Quadratic resonances}

\label{secquadres}

Following the space-time resonance method explained in the introduction, we need to compute the sets
\begin{equation}
\begin{split}
& \mathcal{T}_{\epsilon_0,\epsilon_1,\epsilon_2}^{k,\ell,m} \overset{def}{=} \{ \phi^{k,\ell, m}_{\epsilon_0,\epsilon_1,\epsilon_2} = 0 \}\;\;\;\mbox{(time resonances)} \\
& \mathcal{S}_{\epsilon_0,\epsilon_1,\epsilon_2}^{k,\ell,m} \overset{def}{=} \{ \partial_\eta \phi^{k,\ell, m}_{\epsilon_0,\epsilon_1,\epsilon_2} = 0 \}\;\;\;\mbox{(space resonances)}
\end{split}
\end{equation}
and their intersection
$$
\mathcal{R}_{\epsilon_0,\epsilon_1,\epsilon_2}^{k,\ell,m} \overset{def}{=} \mathcal{T}_{\epsilon_0,\epsilon_1,\epsilon_2}^{k,\ell,m} \cap \mathcal{S}_{\epsilon_0,\epsilon_1,\epsilon_2}^{k,\ell,m} \;\;\;\mbox{(space-time resonances)}.
$$
The analysis of these sets requires some computations, which we delegate to the Appendix. We will only use the results given below, lemmas~\ref{aigle1} and~\ref{aigle2}.

\begin{lem}
\label{aigle1}
(i) The space time resonant sets are either empty, or of the form
$$
\mathcal{R}_{\epsilon_0,\epsilon_1,\epsilon_2}^{k,\ell,m} = \cup_{j=1}^{J_{\epsilon_0,\epsilon_1,\epsilon_2}^{k,\ell,m}} \{ |\eta| = R_{\epsilon_0,\epsilon_1,\epsilon_2,j}^{k,\ell,m} \;,\;\xi = \lambda_{\epsilon_0,\epsilon_1,\epsilon_2,j}^{k,\ell,m} \xi \}\,\,,
$$
where $J_{\epsilon_0,\epsilon_1,\epsilon_2}^{k,\ell,m}$ is an integer, and $R_{\epsilon_0,\epsilon_1,\epsilon_2,j}^{k,\ell,m}$ and $\lambda_{\epsilon_0,\epsilon_1,\epsilon_2,j}^{k,\ell,m}$ are real numbers.
\smallskip

(ii) The intersection of $\mathcal{S}_{\epsilon_0,\epsilon_1,\epsilon_2}^{k,\ell,m}$ and $\mathcal{T}_{\epsilon_0,\epsilon_1,\epsilon_2}^{k,\ell,m}$ is of finite order. This means that there exists an integer $n$ such that (dropping indices for clarity)
$$
\mbox{as $\epsilon \rightarrow 0$,}\;\;\;\operatorname{dist}([\partial B_\epsilon(\mathcal{R})]\cap \mathcal{S}\,,\,[ \partial B_\epsilon(\mathcal{R}) ] \cap \mathcal{T}) \gtrsim \epsilon^n.
$$
\end{lem}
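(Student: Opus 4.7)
The plan is to exploit the full rotational symmetry of the phase $\phi^{k,\ell,m}_{\epsilon_0,\epsilon_1,\epsilon_2}$ in order to reduce the vector system $\{\phi=0,\;\partial_\eta\phi=0\}$ to two scalar equations in two unknowns, and then to invoke real analyticity. Computing
\[
\partial_\eta\phi^{k,\ell,m}_{\epsilon_0,\epsilon_1,\epsilon_2}(\xi,\eta) = -\epsilon_1 \frac{\ell^2 \eta}{\langle\eta\rangle_\ell} + \epsilon_2\frac{m^2(\xi-\eta)}{\langle\xi-\eta\rangle_m},
\]
and setting this to zero immediately forces $\eta$ and $\xi-\eta$ to be parallel; so, away from $\eta=0$ (a degenerate locus I would dispose of by direct inspection), one may write $\xi=\lambda \eta$ for some $\lambda\in\mathbb{R}$. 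Matching scalar magnitudes then reduces the space-resonance condition to one equation $F_S(r,\lambda)=0$ with $r=|\eta|$, and substituting $\xi=\lambda\eta$ into $\phi=0$ gives a second equation $F_T(r,\lambda)=0$; both $F_S$ and $F_T$ are explicit algebraic expressions in $r$, $\lambda$, and the square roots $\sqrt{1+\ell^2 r^2}$, $\sqrt{1+m^2(\lambda-1)^2r^2}$, $\sqrt{1+k^2\lambda^2 r^2}$.

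For (i), I would show that $\{F_S=F_T=0\}$ has only finitely many solutions in $\{r>0\}$. Isolating square roots and squaring (while tracking the sign branches to reject spurious roots) converts the system into two polynomial equations in $r^2$ and $\lambda$; eliminating $r^2$ via a resultant gives a single polynomial equation in $\lambda$ whose finitely many real roots form the list $\lambda^{k,\ell,m}_{\epsilon_0,\epsilon_1,\epsilon_2,j}$, and each root determines the companion $R^{k,\ell,m}_{\epsilon_0,\epsilon_1,\epsilon_2,j}>0$. The full resonant set is then recovered by letting $\hat\eta$ range over $S^2$, using the invariance of $\phi$ under the diagonal $O(3)$-action on $(\xi,\eta)$, which yields exactly the announced form $\mathcal{R} = \cup_j \{|\eta|=R_j,\;\xi=\lambda_j\eta\}$.

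For (ii), after the same reduction $\mathcal{S}$ and $\mathcal{T}$ become, in the $(r,\lambda)$-slice, level sets of the real-analytic functions $F_S$ and $F_T$ whose common zeros are by (i) isolated. Parametrizing the real-analytic curve $\{F_S=0\}$ near such a point and restricting $F_T$ to it produces a real-analytic function of one variable that vanishes at the resonance point but not identically (otherwise $\{F_S=0\}$ and $\{F_T=0\}$ would agree locally, contradicting isolation), so the Lojasiewicz inequality yields an integer $k$ with $|F_T|\gtrsim \operatorname{dist}(\cdot,(R_j,\lambda_j))^k$ along $\{F_S=0\}$. Combined with the Lipschitz bound $\operatorname{dist}(\cdot,\{F_T=0\})\gtrsim |F_T|$, this produces the desired finite-order separation in the slice; the $O(3)$-symmetry lifts the estimate unchanged to the full six-dimensional frequency space with the same exponent $n$.

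The hard part will be the bookkeeping around the squaring step in (i): the sign choices $(\epsilon_0,\epsilon_1,\epsilon_2)$ genuinely split $\mathcal{R}$ into different branches, and naive squaring merges them and introduces parasitic roots that do not correspond to space-time resonances. One must therefore track signs carefully and discard unphysical solutions before declaring finiteness — while also confirming that no branch degenerates into a continuum of solutions (which would violate the stated structure of $\mathcal{R}$). This is the detailed algebraic analysis that the author defers to the Appendix.
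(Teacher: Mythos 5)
Your overall reduction matches the paper's: the condition $\partial_\eta\phi=0$ forces $\eta$ and $\xi-\eta$ to be colinear, so $\xi=\lambda\eta$, and the $O(3)$-invariance of the phases collapses everything to the two scalars $r=|\eta|$ and $\lambda$. Where you diverge is in the finiteness argument for (i). The paper exploits that the magnitude condition in $\partial_\eta\phi=0$ determines $\lambda$ as an \emph{explicit} function of $r$ (for instance $\lambda(r)=1+[(c^4-c^2)r^2+c^4]^{-1/2}$ in the case $\mathcal{R}^{c1c}_{+--}$), so $\mathcal{S}$ is parametrized by $p(r,\omega)=(\lambda(r)r\omega,r\omega)$ and the time-resonance condition restricted to $\mathcal{S}$ becomes a single real-analytic function $Z(r)=\phi(p(r,\omega))$ of one variable. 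Finiteness and finite-order vanishing of the zeros then follow at once from the observation that $Z$ has a nonzero (possibly infinite) limit as $r\to\infty$: the zeros cannot accumulate at infinity, cannot accumulate at a finite point by analyticity, and $Z\not\equiv 0$. Your squaring-and-resultant route is a genuinely different path, and the step you defer at the end --- ruling out a degenerate branch where a whole continuum solves the system, i.e.\ showing the eliminant is not identically zero --- is exactly where the paper's nonzero-limit observation does the work. That check is not automatic from the algebra alone, and the paper does not in fact carry out the resultant bookkeeping you imagine; it sidesteps it entirely via the limit. If you fill in that one observation, your version of (i) closes.

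For (ii) the two arguments are essentially the same mechanism. The paper notes $\operatorname{dist}(p(r),\mathcal{R})\sim|r-r_0|$ near a component of $\mathcal{R}$, that the analytic function $Z$ vanishes to finite order $m$ at $r_0$, and that $|\phi|\lesssim\operatorname{dist}(\cdot,\mathcal{T})$ by the Lipschitz property of $\phi$, giving $\operatorname{dist}(p(r),\mathcal{T})\gtrsim|Z(r)|\gtrsim|r-r_0|^m\sim\operatorname{dist}(p(r),\mathcal{R})^m$. Your Lojasiewicz-along-the-curve estimate combined with the Lipschitz bound $\operatorname{dist}(\cdot,\{F_T=0\})\gtrsim|F_T|$ is the same argument phrased in the $(r,\lambda)$-slice; the $O(3)$-lift is correct and is implicit in the paper's $p(r,\omega)$ parametrization.
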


Thus space-time resonances are separated if, for any set of indices,
$$
\lambda_{\epsilon_0,\epsilon_1,\epsilon_2,j}^{k,\ell,m} R_{\epsilon_0,\epsilon_1,\epsilon_2,j}^{k,\ell,m} \neq R_{\epsilon_0',\epsilon_1',\epsilon_2',j'}^{k',\ell',m'}\;\;\;\;\mbox{and}\;\;\;\;\left| \lambda_{\epsilon_0,\epsilon_1,\epsilon_2,j}^{k,\ell,m} - 1 \right| R_{\epsilon_0,\epsilon_1,\epsilon_2,j}^{k,\ell,m} \neq \lambda_{\epsilon_0',\epsilon_1',\epsilon_2',j'}^{k',\ell',m'} R_{\epsilon_0',\epsilon_1',\epsilon_2',j'}^{k',\ell',m'}.
$$

\subsection{The cut-off functions}

\label{subsecdef}

\subsubsection{Definition of $\theta$}

First pick $M$ such that all the space time resonant sets are contained in the ball of radius $M/2$: $\cup R_{\epsilon_0,\epsilon_1,\epsilon_2,j}^{k,\ell,m} \subset B(0,M/2)$.

It will be necessary in the proof to distinguish between high and low frequencies. To this end, we introduce the cut off function $\theta(\xi,\eta)$, which is such that
\begin{equation}
\label{colvert}
\theta \in \mathcal{C}^\infty_0\;\;\;\;,\;\;\;\;\theta = 1 \;\mbox{on $B(0,M)$}\;\;\;\;\mbox{and}\;\;\;\;\theta = 0 \;\mbox{on $B(0,M+1)^c$}.
\end{equation}

\subsubsection{Definition of $\chi_\mathcal{O}$, $\widetilde{\chi}_{\mathcal{O}}$}

Define the union of all frequencies which are the outcome of a space-time resonance
$$
\mathcal{O} \overset{def}{=} \cup_{\epsilon_0,\epsilon_1,\epsilon_2 , k,\ell,m,j} \left\{ \xi \;\;\mbox{such that there exists $\eta$ with $(\xi,\eta) \in \mathcal{R}_{\epsilon_0,\epsilon_1,\epsilon_2}^{k,\ell,m,j}$} \right\}.
$$
Since space-time resonances are separated, it is possible to find $\delta_0$ such that no frequency in $B_{10\delta_0}(\mathcal{O})$ (a $10\delta_0$-neighbourhood of $\mathcal{O}$) is a source of a space-time resonance. Define $\chi_\mathcal{O}$ a smooth cut-off function such that 
\begin{equation*}
\begin{split}
& \chi_\mathcal{O} = 1 \;\;\;\mbox{on $B_{\delta_0/2}(\mathcal{O})$} \\
& \chi_\mathcal{O} = 0 \;\;\;\mbox{outside of $B_{\delta_0}(\mathcal{O})$}
\end{split}
\end{equation*}
and then let $\widetilde{\chi}_{\mathcal{O}}$ satisfy
$$
\chi_\mathcal{O} + \widetilde{\chi}_{\mathcal{O}} = 1.
$$

\subsubsection{Definition of $\chi_\mathcal{R}^\rho$, $\chi_\mathcal{S}^\rho$ and $\chi_\mathcal{T}^\rho$}

The cut-off functions which we are now about to define require some care: they cut off to a distance $\rho$ of $\mathcal{R}$, and their smoothness depends on $\rho$.

\begin{lem}
\label{aigle2}
For each set of indices $\epsilon_0,\epsilon_1,\epsilon_2, k,\ell,m,j$, it is possible to find cut-off functions
$$
\chi_{\mathcal{T}_{\epsilon_0,\epsilon_1,\epsilon_2,j}^{k,\ell,m}}^\rho(\xi,\eta)\;\;,\;\;\chi_{\mathcal{S}_{\epsilon_0,\epsilon_1,\epsilon_2,j}^{k,\ell,m}}^\rho(\xi,\eta)\;\;,\;\;\chi_{\mathcal{R}_{\epsilon_0,\epsilon_1,\epsilon_2,j}^{k,\ell,m}}^\rho(\xi,\eta).
$$
such that (in the following, we drop the indices $\epsilon_0,\epsilon_1,\epsilon_2 , k,\ell,m,j$ for simplicity)
\begin{itemize}
\item $\chi_\mathcal{R}^\rho$, $\chi_\mathcal{S}^\rho$ and $\chi_\mathcal{T}^\rho$ are smooth.
\item Their sum equals one: $\chi_{\mathcal{T}}^\rho + \chi_{\mathcal{S}}^\rho + \chi_{\mathcal{R}}^\rho = 1$.
\item Outside of $B_{2\delta_0}(\mathcal{R})$, $\chi_{\mathcal{S}}^\rho$ and $\chi_{\mathcal{T}}^\rho$ are independent of $\rho$.
\item The cut-off function $\chi_\mathcal{R}^\rho$ has the following form:
$$
\chi_{\mathcal{R}}^\rho = \sum_j \chi \left( \frac{|\eta|-R_j}{\rho} \right) \chi \left( \frac{\xi - \lambda_j \eta}{\rho} \right),
$$
where $\chi$ is a smooth, compactly supported function. 
\item The derivatives of $\frac{\chi_{\mathcal{S}}^\rho}{\phi}$ and $\frac{\chi_{\mathcal{T}}^\rho \partial_\eta \phi}{|\partial_\eta \phi|^2}$ satisfy if $|(\xi,\eta)| \leq M$:
\begin{equation}
\label{rossignol1}
\mbox{If $|\alpha| \leq 20$,} \;\;\;\;
\left| \partial_{\xi,\eta}^{\alpha} \frac{\chi_{\mathcal{S}}^\rho}{\phi} \right| \;,\; \left| \partial_{\xi,\eta}^{\alpha} \frac{\chi_{\mathcal{T}}^\rho \partial_\eta \phi}{|\partial_\eta \phi|^2} \right| \lesssim \frac{1}{\left[ \rho + \operatorname{dist} \left( (\xi,\eta) , \mathcal{R} \right) \right]^{n}}
\end{equation}
for an integer $n$.
\item And for $|(\xi,\eta)| \geq M$,
\begin{equation}
\label{rossignol2}
\mbox{If $|\alpha| \leq 20$,} \;\;\;\;
\left| \partial_{\xi,\eta}^{\alpha} \frac{\chi_{\mathcal{S}}^\rho}{\phi} \right| \;,\;
\left| \partial_{\xi,\eta}^{\alpha} \frac{\chi_{\mathcal{T}}^\rho \partial_\eta \phi}{|\partial_\eta \phi|^2} \right| \lesssim |\xi,\eta|^{n}
\end{equation}
for an integer $n$.
\end{itemize}
\end{lem}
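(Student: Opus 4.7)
The construction proceeds in three zones, glued by a partition of unity, and hinges on Lemma~\ref{aigle1}(ii), which converts geometric finite-order transversality between $\mathcal{S}$ and $\mathcal{T}$ along $\mathcal{R}$ into pointwise lower bounds on $|\phi|$ and $|\partial_\eta\phi|$. First, take $\chi_{\mathcal{R}}^\rho$ to be exactly the tensor-product formula given in the statement, with $\chi$ a fixed smooth even bump equal to $1$ on $[-1,1]$ and supported in $[-2,2]$. On its support the anisotropic distance to $\mathcal{R}$ is $O(\rho)$, and each $\partial_{\xi,\eta}$ derivative costs at most one factor of $\rho^{-1}$.

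Outside $B_{2\delta_0}(\mathcal{R})$ the closed sets $\mathcal{S}$ and $\mathcal{T}$ are disjoint (since $\mathcal{S}\cap\mathcal{T}=\mathcal{R}$), so I fix a $\rho$-independent smooth partition $\chi_{\mathcal{S}}^\infty+\chi_{\mathcal{T}}^\infty=1$ subordinate to the open cover by the complement of $\mathcal{T}$ (containing $\mathcal{S}$) and the complement of $\mathcal{S}$ (containing $\mathcal{T}$); on the respective supports $|\phi|\gtrsim 1$ and $|\partial_\eta\phi|\gtrsim 1$. This handles the third bullet of the conclusion as well as the simpler derivative estimates there.

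The delicate region is the annulus $A_\rho\overset{def}{=}\{c\rho\leq\operatorname{dist}(\cdot,\mathcal{R})\leq 2\delta_0\}\cap\{|(\xi,\eta)|\leq M+1\}$. For $(\xi,\eta)\in A_\rho$ at distance $\epsilon$ from $\mathcal{R}$, Lemma~\ref{aigle1}(ii), together with the fact that $\mathcal{S}$ and $\mathcal{T}$ are smooth submanifolds with nonvanishing normal data near $\mathcal{R}$ (so $|\phi|\sim\operatorname{dist}(\cdot,\mathcal{T})$ and $|\partial_\eta\phi|\sim\operatorname{dist}(\cdot,\mathcal{S})$ locally), produces the Lojasiewicz dichotomy
$$
|\phi(\xi,\eta)|\gtrsim \epsilon^n \quad\mbox{or}\quad |\partial_\eta\phi(\xi,\eta)|\gtrsim \epsilon^n.
$$
I then define $\chi_{\mathcal{S}}^\rho$ in $A_\rho$ as a smooth function of the scale-invariant quantity
$$
\sigma(\xi,\eta)\overset{def}{=}\frac{|\phi|^{2}}{|\phi|^{2}+|\partial_\eta\phi|^{2}},
$$
equal to $1$ once $\sigma$ exceeds a fixed small positive threshold and vanishing for $\sigma$ near $0$; I then extend by $\chi_{\mathcal{S}}^\infty$ across the outer boundary $\operatorname{dist}(\cdot,\mathcal{R})=2\delta_0$ and by $0$ across the inner boundary $\operatorname{supp}\chi_{\mathcal{R}}^\rho$, using a smooth radial cutoff in $\operatorname{dist}(\cdot,\mathcal{R})$ to blend, and set $\chi_{\mathcal{T}}^\rho\overset{def}{=}1-\chi_{\mathcal{R}}^\rho-\chi_{\mathcal{S}}^\rho$. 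By construction, on $\operatorname{supp}\chi_{\mathcal{S}}^\rho$ we have $|\phi|\gtrsim(\rho+\operatorname{dist}(\cdot,\mathcal{R}))^{n}$, and dually on $\operatorname{supp}\chi_{\mathcal{T}}^\rho$.

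The main obstacle, and the source of the integer $n$ in~(\ref{rossignol1}), is the derivative bookkeeping. A derivative falling on $\chi_{\mathcal{S}}^\rho$ itself costs $\epsilon^{-n}$ because $\sigma$ transitions across scales of order $\epsilon^n$; a derivative falling on $1/\phi$ costs another power of $\epsilon^{-n}$ by the lower bound; iterating Leibniz and the chain rule up to $|\alpha|\leq 20$ produces a finite sum of such terms, hence (\ref{rossignol1}) after possibly enlarging $n$. The analogous bound for $\chi_{\mathcal{T}}^\rho\partial_\eta\phi/|\partial_\eta\phi|^{2}$ is symmetric. The high-frequency estimate (\ref{rossignol2}) is then free: by the choice of $M$ no space-time resonance lies in $|(\xi,\eta)|\geq M$, so $\chi_{\mathcal{S}}^\rho,\chi_{\mathcal{T}}^\rho$ can be chosen $\rho$-independent there, and the polynomial growth of $\phi$ and $\partial_\eta\phi$ (sums of $\<\cdot\>_k$, whose derivatives are polynomially bounded) yields the stated polynomial upper bound.
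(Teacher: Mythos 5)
Your low-frequency construction is a legitimate variant of the paper's. Where the paper defines $\chi_{\mathcal{S}}^\rho$ through a cutoff applied to the normalized signed quantity $\bigl(\operatorname{dist}(\cdot,\mathcal{T})-\operatorname{dist}(\cdot,\mathcal{S})\bigr)/\operatorname{dist}(\cdot,\mathcal{R})^{n+1}$, you use $\sigma=|\phi|^2/(|\phi|^2+|\partial_\eta\phi|^2)$; both quantities are a proxy for ``which of $\mathcal{T}$, $\mathcal{S}$ am I closer to,'' and both exploit the finite-order intersection of Lemma~\ref{aigle1}(ii) in the same way to get the lower bounds $|\phi|\gtrsim(\rho+\operatorname{dist})^n$ on $\operatorname{supp}\chi_{\mathcal{S}}^\rho$, dually $|\partial_\eta\phi|\gtrsim(\rho+\operatorname{dist})^n$ on $\operatorname{supp}\chi_{\mathcal{T}}^\rho$, and the blow-up of derivatives by negative powers of $\rho+\operatorname{dist}$. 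Your $\sigma$ is arguably cleaner to differentiate (it is manifestly smooth off $\mathcal{R}$, whereas distance functions are only smooth locally), though the blending near the two annular boundaries that you leave informal is exactly where the paper's choice $\chi_{\mathcal{T}}^\rho = 1-\chi_{\mathcal{R}}^\rho-\chi_{\mathcal{S}}^\rho$ does the bookkeeping automatically. This part is fine.

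The high-frequency estimate~(\ref{rossignol2}), however, is \emph{not} free, and this is where your argument has a real gap. The fact that $\mathcal{R}\subset B(0,M/2)$ rules out exact space-time resonances at high frequency, but it says nothing about whether $\mathcal{S}$ and $\mathcal{T}$ approach each other asymptotically as $|(\xi,\eta)|\to\infty$. If they did (say, with $\operatorname{dist}(\mathcal{S},\mathcal{T})$ decaying super-polynomially along some unbounded branch of $\mathcal{S}$), then no $\rho$-independent smooth partition $\{\chi_{\mathcal{S}},\chi_{\mathcal{T}}\}$ with $\chi_{\mathcal{S}}$ supported off $\mathcal{T}$ and $\chi_{\mathcal{T}}$ supported off $\mathcal{S}$ could have only polynomially growing derivatives of $\chi_{\mathcal{S}}/\phi$ and $\chi_{\mathcal{T}}\partial_\eta\phi/|\partial_\eta\phi|^2$. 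The paper closes this by an explicit asymptotic computation (equation~(\ref{balbuzard})): along $\mathcal{S}$, it parameterizes the branch going to infinity and shows $\phi\to C_0\neq 0$, so $|\phi|\gtrsim 1$ on the high-frequency support of $\chi_{\mathcal{S}}$, and correspondingly $|\partial_\eta\phi|\gtrsim 1$ on the high-frequency support of $\chi_{\mathcal{T}}$; the polynomial loss in~(\ref{rossignol2}) then comes only from the polynomial growth of $\partial^\alpha\phi$, not from a degenerating lower bound. You need some version of this asymptotic separation statement — ``polynomial growth of $\phi$ and $\partial_\eta\phi$'' alone does not imply it.
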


\subsection{Bounds on the associated pseudo-product operators}

The manipulations which we will perform lead to pseudo product operators with symbols $m$ of the form $\frac{\chi_{\mathcal{S}}^\rho}{\phi}$ or $\frac{\chi_{\mathcal{T}}^\rho \partial_\eta \phi}{|\partial_\eta \phi|^2}$. Such operators are a priori not uniformly bounded (in $\rho$) on Lebesgue spaces satisfying the H\"older relation; as explained above, this is due to singularities at infinity, and along $\mathcal{R}$. The following proposition gives rough, but sufficient for our purposes, bound.

\begin{prop} 
\label{toucan}
(i) If $p$, $q$ and $r$ satisfy $\frac{1}{r} = \frac{1}{p} + \frac{1}{q}$, then uniformly in $\rho$
$$
\left\| T_{ \chi \left( \frac{\xi - \lambda \eta}{\rho} \right)} (f,g) \right\|_r \lesssim \|f\|_p \|g\|_q.
$$
(recall that $\chi$ was defined in the statement of Lemma~\ref{aigle2}).
\medskip

(ii) If $0 \leq s < \frac{3}{2}$,
$$
\left\| \chi \left( \frac{|D|-R}{\rho} \right) f \right\|_2 \lesssim \rho^{s/3} \||x|^s f \|_2.
$$
\medskip

(iii) There exists a constant $A$ such that if
$$ m(\xi,\eta) = \theta(\xi,\eta) \frac{\chi_{\mathcal{S}}^\rho}{\phi} \;\;\;\;\mbox{or}\;\;\;\;\theta(\xi,\eta) \frac{\chi_{\mathcal{T}}^\rho \partial_\eta \phi}{|\partial_\eta \phi|^2},
$$
then
$$
\left\|T_m (f,g) \right\|_r \lesssim \frac{1}{\rho^A} \|f\|_p \|g\|_q \;\;\;\;\mbox{if $\frac{1}{p} + \frac{1}{q} = \frac{1}{r}$}.
$$

\medskip
(iv) There exists a constant, which we also denote $A$, such that if $T\geq 1$ and
$$
m(\xi,\eta) = \left[ \theta\left(\frac{(\xi,\eta)}{T} \right) - \theta(\xi,\eta) \right] \chi_{\mathcal{T}}^\rho \;\;\;\;\mbox{or}\;\;\;\; \left[ \theta\left(\frac{(\xi,\eta)}{T} \right) - \theta(\xi,\eta) \right] \chi_{\mathcal{S}}^\rho,
$$
then
$$
\left\|T_m (f,g) \right\|_r \lesssim T^A \|f\|_p \|g\|_q\;\;\;\;\mbox{if $\frac{1}{p} + \frac{1}{q} = \frac{1}{r}$}.
$$

\medskip
(v) There exists a constant, which we still denote $A$, such that if $T \geq 1$ and
$$
m(\xi,\eta) = \widetilde{\chi}_\mathcal{O}(\xi) \chi_{\mathcal{S}}(\xi,\eta) \theta\left(\frac{(\xi,\eta)}{T}\right)\frac{1}{\phi}\;\;\;\;\mbox{or}\;\;\;\;\widetilde{\chi}_\mathcal{O}(\xi) \chi_{\mathcal{T}}(\xi,\eta) \theta\left(\frac{(\xi,\eta)}{T}\right)\frac{\partial_\eta \phi}{|\partial_\eta \phi|^2},
$$
then, for $0\leq s\leq 1$ and $\frac{1}{p} + \frac{1}{q} = \frac{1}{r}$,
$$
\left\|| x|^s T_m (f,g) \right\|_r \lesssim T^A \|\<x\>^s f\|_p \|g\|_q\;\;\;\;\mbox{and}\;\;\;\;\left\|| x|^s T_m (f,g) \right\|_r \lesssim T^A \| f\|_p \|\<x\>^s g\|_q.
$$
\end{prop}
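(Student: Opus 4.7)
The five parts split into those amenable to ``soft'' arguments and the one ((ii)) requiring genuine harmonic-analysis content; I would treat each in turn, making full use of Lemma~\ref{aigle2}.

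Part (i) I would obtain by direct kernel computation. The change of variables $\xi = \lambda\eta + \rho\xi'$ in the Fourier representation of $T_m$ collapses one integral to a Dirac delta and yields
$$
T_{\chi((\xi-\lambda\eta)/\rho)}(f,g)(x) = \int \check{\chi}(w)\, f\bigl(x+(\lambda-1)w/\rho\bigr)\, g(x-w/\rho)\, dw,
$$
after which Minkowski in $w$ followed by H\"older gives the uniform-in-$\rho$ bound $\|\check\chi\|_1 \|f\|_p\|g\|_q$. Parts (iii) and (iv) both reduce to a Fourier-series expansion of the symbol on a compact box. For (iii), the cutoff $\theta$ confines $m$ to a fixed box, and Lemma~\ref{aigle2} gives $\|m\|_{C^N}\lesssim \rho^{-A_N}$ for any $N\leq 20$; expanding $m = \sum_{k,\ell}c_{k,\ell}e^{i(\xi\cdot a_k+\eta\cdot b_\ell)}$ on that box yields $\sum|c_{k,\ell}|\lesssim\rho^{-A}$ once $N$ exceeds the dimension $6$ of $(\xi,\eta)$-space, while each Fourier mode $T_{e^{i(\xi\cdot a+\eta\cdot b)}}(f,g)(x) = f(x+a+b)\,g(x+a)$ is a product of physical-space translations, bounded on H\"older exponents with unit norm. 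Part (iv) is the same argument on a box of size $\sim T$, using that outside $B_{2\delta_0}(\mathcal R)$ the cutoffs $\chi_\mathcal{S}^\rho,\chi_\mathcal{T}^\rho$ are $\rho$-independent smooth bounded functions, so that $\|m\|_{C^N}\lesssim T^{A_N}$.

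Part (ii), the principal technical step, I would prove by complex interpolation between two endpoints. The case $s=0$ is just Plancherel. At $s$ just below $3/2$, I would use the Sobolev trace theorem: for $s>1/2$, restriction to a sphere $\{|\xi|=r\}$ is bounded from $H^s(\mathbb R^3)$ to $L^2(S^2)$ (uniformly in $r$ by scaling), so integrating over $r$ in the shell of width $\rho$ produces the factor $\rho^{1/2}$. Identifying $\||x|^s f\|_2$ with the corresponding $\xi$-side Sobolev norm of $\widehat f$ via Plancherel gives the endpoint $\|\chi((|D|-R)/\rho)f\|_2\lesssim \rho^{1/2}\||x|^s f\|_2$ near $s=3/2$. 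Complex interpolation on the weighted-$L^2$ scale between $\rho^0$ at $s=0$ and $\rho^{1/2}$ at $s=3/2$ then produces $\rho^{s/3}$ at intermediate $s$.

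For part (v), the key structural observation is that $\widetilde{\chi}_\mathcal{O}(\xi)\neq 0$ forces $\xi$ not to be an outcome, hence $(\xi,\eta)\notin\mathcal R$ for every $\eta$; combined with $\chi_\mathcal S$ (respectively $\chi_\mathcal T$) excluding a neighbourhood of $\mathcal T$ (respectively $\mathcal S$), the symbol $m$ is smooth on its support and its derivatives grow at most polynomially in $T$. The commutator identity $x_j T_m(f,g) = T_m(x_j f,g) + T_{i(\partial_{\xi_j}+\partial_{\eta_j})m}(f,g)$ then reduces the $s=1$ weighted bound to the unweighted bound of (iv), at the cost of one extra derivative on $m$ and hence a slightly worse $T^A$; interpolation between $s=0$ and $s=1$ handles the intermediate range. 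The main obstacle, as expected, is the trace-theoretic endpoint in (ii); the remaining pieces are Fourier-series bookkeeping plus a standard commutator identity.
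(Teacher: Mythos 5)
Your proposal is correct and for parts (i), (iii), (iv), (v) follows essentially the same path as the paper: (i) is by explicit kernel/duality (the paper phrases this through Proposition~\ref{geai}); (iii) and (iv) both amount to controlling $\|\widehat m\|_{L^1(\mathbb R^6)}$ through the at-most-twenty derivatives that Lemma~\ref{aigle2} provides, which is exactly what the paper does via Corollary~\ref{loriot} ($\|T_m\|\lesssim\|m\|_{H^{s}}$, $s>3$), your Fourier-series expansion being a cosmetic variant of the same estimate; and for (v) the paper likewise declares ``interpolation between $s=0$ and $s=1$'', the $s=1$ endpoint being precisely your commutator identity $x_j T_m(f,g)=T_m(x_j f,g)+T_{i(\partial_{\xi_j}+\partial_{\eta_j})m}(f,g)$. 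The one place where you take a genuinely different route is (ii). You prove the $s=3/2^-$ endpoint via the homogeneous trace theorem on a sphere of radius near $R$, integrate in the radial direction over the $\rho$-shell to harvest the $\rho^{1/2}$, and then complex-interpolate with the trivial $s=0$ bound. The paper instead observes the one-line inequality $\|m(D)f\|_2\lesssim\|m\|_{L^{3/s}}\,\||x|^s f\|_2$ (H\"older on the Fourier side followed by the Sobolev embedding $\dot H^s(\mathbb R^3)\hookrightarrow L^{(1/2-s/3)^{-1}}$, valid exactly for $0<s<3/2$), and then just computes $\|\chi((|\xi|-R)/\rho)\|_{3/s}\sim\rho^{s/3}$. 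Both give the claim; the paper's version is shorter and makes the restriction $s<3/2$ transparent, whereas your trace-theorem argument is a bit heavier (you must track the uniformity of the trace constant in the sphere radius and in $s$ near $3/2$) and also, as you likely noticed, at a single $s\in(1/2,3/2)$ the trace bound directly yields a stronger power $\rho^{1/2}$ than the stated $\rho^{s/3}$; interpolating from near $3/2$ to $0$ merely recovers the weaker claim uniformly down to $s=0$. Either way the conclusion is the same, and no step of your argument is flawed.
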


\begin{proof} (i) follows from Proposition~\ref{geai}; (iii) and (iv) from Lemma~\ref{aigle2} and Corollary~\ref{loriot}; (v) results from the interpolation between the cases $s=0$ and $s=1$, which are simple.
Finally, (ii) is a particular case of the general inequality
$$
\left\|m(D) f \right\|_2 \lesssim \|m\|_{3/s} \left\||x|^s f\right\|_2,
$$
which is a consequence of the Plancherel and Sobolev embedding theorems:
$$
\left\|m(D) f \right\|_2 = \left\| m(\xi) \widehat{f}(\xi) \right\|_2 \leq \left\|m\right\|_{3/s} \|\widehat{f}\|_{\left(\frac{1}{2} - \frac{s}{3} \right)^{-1} } \lesssim \left\|m\right\|_{3/s} \||D|^s \widehat{f}\|_2 = \left\|m\right\|_{3/s} \left\||x|^s f\right\|_2. 
$$
\end{proof}

\section{Proof of the main theorem}

\subsection{The a priori estimate}

\label{gypaete}

The proof of the theorem will essentially consist in the following global a priori estimate ($\delta_1$ and $N$ are constants whose precise values will be fixed in the following):
\begin{subequations}
\begin{align}
& \label{canard1} \left\| u^k_{\pm} \right\|_{H^N} \lesssim 1 \;\;\;\;\mbox{(regularity in $L^2$)} \\
& \label{canard2} \left\| u^k_{\pm} \right\|_{L^{\left(\frac{1}{3}-\delta_1\right)^{-1}}} \lesssim \frac{1}{\<t\>^{\frac{1}{2}+3\delta_1}} \;\;\;\;\mbox{(decay slighlty above $L^3$)} \\
&\label{canard3} \left\| \widetilde{\chi}_{\mathcal{O}}(D) u^k_{\pm} \right\|_{L^{\left(\frac{1}{6}+\delta_1\right)^{-1}}} \lesssim \frac{1}{\<t\>^{1-3\delta_1}} \;\;\;\;\mbox{(decay below $L^6$ for ``non-outcome'' frequencies)}\\
&\label{canard4} \left\| |x| f^k_{\pm} \right\|_2 \lesssim \sqrt{\<t\>} \;\;\;\;\mbox{(localization in $L^2$)} \\
&\label{canard5} \left\| |x|^{1/8} \widetilde{\chi}_{\mathcal{O}}(D) f^k_{\pm} \right\|_2 \lesssim 1 \;\;\;\;\mbox{(localization in $L^2$ for ``non-outcome'' frequencies).}
\end{align}
\end{subequations}
Observe that interpolating between~(\ref{canard1}) and~(\ref{canard2}) gives
\begin{equation}
\label{canard6}
\left\|u\right\|_3 \lesssim \frac{1}{\sqrt{\<t\>}}.
\end{equation}
Define the associated norm
\begin{equation*}
\begin{split}
\left\|u\right\|_X & = \sum_{\epsilon=\pm} \sum_{k=1,c} \sup_t \left[ \|u_\epsilon^k\|_{H^N} + \<t\>^{\frac{1}{2}+3\delta_1} \left\| u_\epsilon^k \right\|_{L^{\left(\frac{1}{3}-\delta_1\right)^{-1}}} + \<t\>^{1-3\delta_1} \left\| \widetilde{\chi}_{\mathcal{O}}(D) u_\epsilon^k \right\|_{L^{\left(\frac{1}{6}+\delta_1\right)^{-1}}}\right. \\
& \;\;\;\;\;\;\;\;\;\;\;\;\;\;\;\;\;\;\;\;\;\;\;\;\;\;\;\;\;\;\;\;\;\;\;\;\;\; + \left. \frac{1}{\sqrt{t}} \left\||x| f_\epsilon^k\right\|_2 + \left\| |x|^{1/8} \widetilde{\chi}_{\mathcal{O}} f^k_{\pm} \right\|_2 \right].
\end{split}
\end{equation*}

The a priori estimate which we will prove is that under the assumptions of Theorem~\ref{maintheorem}, $\|u\|_X \lesssim \epsilon$.

\subsection{From the a priori estimate $\|u\|_X \lesssim \epsilon$ to the theorem}

Once we know that $\|u\|_X \lesssim \epsilon$, the proof of the theorem follows in a straightforward way. Indeed, local existence is easily obtained given the available regularity; it is extended to a global result by the a priori estimate. Finally, scattering follows easily, by applying the same kind of estimate as in Subsection~\ref{subsecscat}.

\subsection{Reduction to a quadratic estimate}

We start from Duhamel's formula~(\ref{pingouin}). When manipulating this expression, we will not distinguish between all the possible phases, but simply use the lemmas~\ref{aigle1} and~\ref{aigle2}, which hold for all of them. 
Thus in the following, we drop all the indices $k, \ell, m, \epsilon_0, \epsilon_1, \epsilon_2$. Similarly, when using the cut-off functions of Lemma~\ref{aigle2}, we will of course choose the ones which are adapted to the phase at hand, but we will not keep track of the indices $k, \ell, m,\epsilon_0, \epsilon_1, \epsilon_2$ which they carry. We also drop inessential constants, and forget about the summation in~(\ref{pingouin}) for simplicity. Finally, we choose $\epsilon_0=+$, which is of course harmless. 

Duhamel's formula now reads
$$
u(t) = e^{it\<D\>} u_0 + \int_0^t e^{i(t-s)\<D\>} \frac{u(s)}{\<D\>}\frac{u(s)}{\<D\>}\,ds,
$$
or equivalently, for the profile $f$ in Fourier space,
$$
\widehat{f}(t,\xi) = \widehat{f_0}(\xi) + \widehat{F} (\xi,t)
$$
where we set
$$
\widehat{F} (\xi,t) = \int_0^t \int e^{i s \phi(\xi,\eta)} \frac{\widehat{f} (\eta)}{\<\eta\>} \frac{ \widehat{f} (\xi-\eta)}{\< \xi-\eta \>} \, d\eta\,ds.
$$
Since the data satisfy the hypothesis of the theorem, the estimate
$$
\left\| e^{it\<D\>} u_0 \right\|_X \lesssim \epsilon
$$
is easy. 
Suppose we can prove the a priori estimate
\begin{equation}
\label{bergeronnette}
\left\|e^{it\<D\>}F\right\|_X \lesssim \|u\|_X^2.
\end{equation}
It implies the inequality
$$
\|u\|_X \lesssim \epsilon + \|u\|_X^2,
$$
which gives, after choosing $\epsilon$ small enough, the desired inequality $\|u\|_X\lesssim \epsilon$.

\bigskip

We will actually prove (in Section~\ref{secapriori}) the a priori estimate
$$
\|e^{it\<D\>} G\|_X \lesssim \|u\|_X^2,
$$ 
where $G$ is defined for $t \geq 1$ by
$$
\widehat{G} (\xi,t) = \int_0^t \int e^{i s \phi(\xi,\eta)} \frac{\widehat{f} (\eta)}{\<\eta\>} \frac{ \widehat{f} (\xi-\eta)}{\< \xi-\eta \>} \, d\eta\,ds.
$$
It will spare a lot of cumbersome notations to work on $G$ rather than on $F$, and furthermore it is simple to see how the estimate on $F$ follows from the one on $G$.

\section{Proof of the a priori estimate $\|e^{it\<D\>} G\|_X \lesssim \|u\|_X^2$}

\label{secapriori}

\subsection{Choosing the small constants $\delta_1$, $\delta_2$, $\delta_3$ and the large constant $N$}

Two constants which appear in the definition of the $X$ space, namely $\delta_1$ and $N$, have not been defined yet. We will soon introduce two more: $\delta_2$ and $\delta_3$. 
The former will give the rate at which $\mathcal{R}$ is cut off: namely the cut-off functions around $\mathcal{R}$ shall be of the form $\chi^{t^{-{\delta_2}}}_\mathcal{R}$ $\chi^{t^{-{\delta_2}}}_\mathcal{R}$ $\chi^{t^{-{\delta_2}}}_\mathcal{R}$.
The latter will give the rate at which high frequencies are cut off: the high/low cut-off will be $\theta\left( \frac{\cdot}{t^{\delta_3}} \right)$.

We choose these constants so that
$$
1 >> \delta_2 >> \delta_1 >> \delta_3 > 0 \;\;\;\; \mbox{and} \;\;\;\; N \delta_3 >> 1.
$$
This choice of the constants will ensure that the inequalities~(\ref{chevalier1})~(\ref{chevalier2})~(\ref{chevalier3})~(\ref{chevalier4}) ~(\ref{chevalier5})~(\ref{chevalier6})~(\ref{chevalier7})~(\ref{chevalier8})~(\ref{chevalier9})~(\ref{chevalier10})~(\ref{chevalier11})~(\ref{chevalier12}) hold, which will enable us to close the estimates.

\subsection{Estimates on $e^{it\<D\>} \partial_t f$}

Since $e^{it\<D\>} \partial_t f$ can be written as a sum of terms of the form $u^2$,
$$
\left\| e^{it\<D\>} \partial_t f \right\|_{\left( \frac{2}{3} - 2\delta_1 \right)^{-1}} \lesssim \left\|u\right\|_{\left( \frac{1}{3} - \delta_1 \right)^{-1}}^2 \lesssim \|u\|_X^2 \frac{1}{\<t\>^{1+6\delta_1}}.
$$
Similarly,
$$
\left\| e^{it\<D\>} \partial_t f\right\|_{3/2} \lesssim \|u\|_X^2 \frac{1}{\<t\>}.
$$

\subsection{Estimate for $G$ in the norm $\sup_t \left\|e^{it\<D\>} G \right\|_{H^N}$}

\label{subsecscat}

Using the Strichatz estimate~(\ref{mesange}) and the product law of Lemma~\ref{heron} gives
\begin{equation*}
\begin{split}
\left\| \int_1^t \int e^{is \<D\>} \frac{u}{\<D\>} \frac{u}{\<D\>}\,ds \right\|_{H^N} & \lesssim \left\| \frac{u}{\<D\>} \frac{u}{\<D\>} \right\|_{L^{\left( \frac{1}{2} + \frac{3}{2}\delta_1 \right)^{-1} } W^{N+\frac{5}{6},\left( \frac{5}{6}-\delta_1\right)^{-1}} } \\
& \lesssim \left\| \left\| u \right\|_{H^N} \left\| u \right\|_{L^{\left(\frac{1}{3}-\delta_1 \right)^{-1}}} \right\|_{L^{\left( \frac{1}{2} + \frac{3}{2}\delta_1 \right)^{-1} }} \\
& \lesssim \left\| t^{-\left( \frac{1}{2} + 3\delta_1 \right)} \right\|_{L^{\left( \frac{1}{2} + \frac{3}{2}\delta_1 \right)^{-1} }} \|u\|_X^2 \lesssim \|u\|_X^2.
\end{split}
\end{equation*}

\subsection{Estimate for $G$ in the norm $\sup_t \frac{1}{\sqrt{t}} \left\||x| G \right\|_2$}

By Plancherel theorem, estimating $xG$ in $L^2$ is equivalent to estimating $\partial_{\xi} \widehat{G}(\xi)$ in $L^2$. Thus we want to bound
\begin{subequations}
\begin{align}
\label{canari1}
\partial_\xi \widehat{G}(\xi) = & \int_1^t \int e^{i s \phi} s \partial_\xi \phi \frac{\widehat{f} (\eta)}{\<\eta\>} \frac{ \widehat{f} (\xi-\eta)}{\< \xi-\eta \>} \, d\eta\,ds \\
\label{canari2}
& + \int_1^t \int e^{i s \phi} \frac{\widehat{f} (\eta)}{\<\eta\>} \frac{ \widehat{xf} (\xi-\eta)}{\< \xi-\eta \>} \, d\eta\,ds \\
& \;\;\;\;\;\;\;\;\;\;\;+ \mbox{\{easier term\}},
\end{align}
\end{subequations}
where the ``easier term'' correspond to the case where $\partial_\xi$ hits $\frac{1}{\< \xi-\eta \>}$. In order to estimate~(\ref{canari2}) use successively the Strichartz estimates~(\ref{mesange}), the product law Lemma~\ref{heron} and the estimates~(\ref{canard4}) and~(\ref{canard6}) to obtain
\begin{equation*}
\begin{split}
\left\|(\ref{canari2})\right\|_2 & = \left\| \int_1^t e^{is\<D\>} \frac{u}{\<D\>} \frac{e^{is\<D\>} xf}{\<D\>}\,ds \right\|_2 \\
& \lesssim \left\| \frac{u}{\<D\>} \frac{e^{is\<D\>} xf}{\<D\>} \right\|_{L^2\left([1,t], W^{11/12,6/5}\right)} \\
& \lesssim \left\| \left\|u\right\|_3 \left\| xf \right\|_2 \right\|_{L^2\left([1,t]\right)} \\
& \lesssim \|u\|_X^2 \| 1 \|_{L^2\left([1,t]\right)} \lesssim \sqrt{t} \|u\|_X^2.
\end{split}
\end{equation*}

Next we focus on~(\ref{canari1}). This term will be treated by splitting the frequency space. Writing $u = \chi_{\mathcal{O}}(D) u + \widetilde{\chi}_{\mathcal{O}}(D)u$ gives
\begin{subequations}
\begin{align}
\label{colibri1}
(\ref{canari1}) = & \int_1^t \int e^{i s \phi} s \partial_\xi \phi \frac{\chi_{\mathcal{O}}(\eta) \widehat{f} (\eta)}{\<\eta\>} \frac{ \chi_{\mathcal{O}}(\xi-\eta) \widehat{f} (\xi-\eta)}{\< \xi-\eta \>} \, d\eta\,ds \\
\label{colibri2}
& + \int_1^t \int e^{i s \phi} s \partial_\xi \phi \frac{\widetilde{\chi}_{\mathcal{O}}(\eta) \widehat{f} (\eta)}{\<\eta\>} \frac{\chi_{\mathcal{O}}(\xi-\eta) \widehat{f} (\xi-\eta)}{\< \xi-\eta \>} \, d\eta\,ds \\
& \;\;\;\;\;\;\;\;\;\;\;\;\;\;\;\;\;\;\;\;\;\;\;\;\;\;\;\;\;\;\;\;\;\; + \mbox{\{symmetric and easier terms\}}.
\end{align}
\end{subequations}
(in particular, the interaction of $\widetilde{\chi}_{\mathcal{O}}(D) u$ with itself is easier since the estimates on $\widetilde{\chi}_{\mathcal{O}}(D) u$ are stronger). In order to estimate~(\ref{colibri2}), observe that $\partial_\xi \phi$ is a harmless sum of bounded Fourier multipliers, thus
\begin{equation*}
\begin{split}
\|(\ref{colibri2})\|_2 & = \left\| \int_1^t s e^{is\<D\>} T_{\partial_\xi \phi} \left( \frac{\widetilde{\chi}_{\mathcal{O}}(D)}{\<D\>} u\,,\,\frac{\chi_{\mathcal{O}}(D)}{\<D\>}u\right)\,ds \right\|_2 \\
& \lesssim  \int_1^t s \left\|\frac{\chi_{\mathcal{O}}(D)}{\<D\>} u\right\|_{L^{\left(\frac{1}{3}-\delta_1\right)^{-1}}} \left\|\frac{\widetilde{\chi}_{\mathcal{O}}(D)}{\<D\>} u\right\|_{L^{\left(\frac{1}{6}+\delta_1\right)^{-1}}}\,ds \\
&  \lesssim \|u\|_X^2 \int_1^t s \frac{1}{s^{\frac{1}{2}+3\delta_1}} \frac{1}{s^{1-3\delta_1}}\,ds \lesssim \sqrt{t} \|u\|_X^2.
\end{split}
\end{equation*}
We are left with~(\ref{colibri1}). 
We use here that resonances are separated: since the above term corresponds to interactions for which $\eta$ and $\xi-\eta$ are within $\delta_0$ of $\mathcal{O}$, there cannot be any space-time resonance, which means: either $(\xi,\eta)$ is not in $\mathcal{S}$, or it is not in $\mathcal{T}$. 

In other words, by point 3 of Lemma~\ref{aigle2}, we can split the integration domain of~(\ref{colibri1}) by adding the cut-off functions $\chi_{\mathcal{T}}$ and $\chi_{\mathcal{S}}$, which do not depend on time.
\begin{subequations}
\begin{align}
\label{moineau1}
(\ref{colibri1}) = & \int_1^t \int \chi_\mathcal{T}(\xi,\eta) e^{i s \phi} s \partial_\xi \phi \frac{\chi_{\mathcal{O}}(\eta) \widehat{f} (\eta)}{\<\eta\>} \frac{ \chi_{\mathcal{O}}(\xi-\eta) \widehat{f} (\xi-\eta)}{\< \xi-\eta \>} \, d\eta\,ds \\
\label{moineau2}
& + \int_1^t \int \chi_\mathcal{S}(\xi,\eta) e^{i s \phi} s \partial_\xi \phi \frac{\chi_{\mathcal{O}}(\eta) \widehat{f} (\eta)}{\<\eta\>} \frac{ \chi_{\mathcal{O}}(\xi-\eta) \widehat{f} (\xi-\eta)}{\< \xi-\eta \>} \, d\eta\,ds .
\end{align}
\end{subequations}
Estimating~(\ref{moineau1}) is simple: using that $\partial_\eta \phi$ does not vanish on its integration domain, we integrate by parts with the help of the formula $\frac{\partial_\eta \phi}{is|\partial_\eta \phi|^2} \cdot \partial_\eta e^{is\phi} = e^{is\phi}$ and obtain a term which is very similar to~(\ref{canari2}), and can be estimated in an identical way.

In order to estimate~(\ref{moineau2}), using that $\phi$ does not vanish on its integration domain, integrate by parts via the formula $\frac{1}{i\phi} \partial_s e^{is\phi} = e^{is\phi}$. This gives
\begin{subequations}
\begin{align}
\label{bleue1}
(\ref{moineau2}) = & \int \frac{\chi_\mathcal{S}(\xi,\eta) \partial_\xi \phi}{i\phi} e^{i t \phi} t  \frac{\chi_{\mathcal{O}}(\eta) \widehat{f} (\eta)}{\<\eta\>} \frac{ \chi_{\mathcal{O}}(\xi-\eta) \widehat{f} (\xi-\eta)}{\< \xi-\eta \>} \, d\eta \\
\label{bleue2}
& - \int_1^t \int \frac{\chi_\mathcal{S}(\xi,\eta) \partial_\xi \phi}{i\phi} e^{i s \phi} s  \frac{\chi_{\mathcal{O}}(\eta) \partial_s \widehat{f} (\eta)}{\<\eta\>} \frac{ \chi_{\mathcal{O}}(\xi-\eta) \widehat{f} (\xi-\eta)}{\< \xi-\eta \>} \, d\eta\,ds \\
& \;\;\;\;\;\;\;\;\;\;\;\;\;\;\;\;\;\;\;\;\;\;\;\;\;\;\;\;\;\;\;\;\;\;+ \mbox{\{symmetric and easier terms\}}
\end{align}
\end{subequations}
(where the terms which are not explicitly written are the boundary term at $s=1$, and the terms where $\partial_s$ hits the second $f$, or $s$).
Proceed in a straightforward fashion to estimate~(\ref{bleue1}): by Bernstein's inequality~(\ref{lemmadeltaj}) and Proposition~\ref{oiecendree},
\begin{equation*}
\begin{split}
\|(\ref{bleue1})\|_2 & = t \left\| T_{\frac{\chi_\mathcal{S}(\xi,\eta) \partial_\xi \phi\chi_{\mathcal{O}}(\eta)\chi_{\mathcal{O}}(\xi-\eta) }{i\phi\<\eta\>\< \xi-\eta \>}} (u,u) \right\|_2 \lesssim t \left\| T_{\frac{\chi_\mathcal{S}(\xi,\eta) \partial_\xi \phi\chi_{\mathcal{O}}(\eta)\chi_{\mathcal{O}}(\xi-\eta) }{i\phi\<\eta\>\< \xi-\eta \>}} (u,u) \right\|_{3/2} \\
& \lesssim t \|u\|_3^2 \lesssim \|u\|_X^2.
\end{split}
\end{equation*}
As for~(\ref{bleue2}), still by Bernstein's inequality~(\ref{lemmadeltaj}) and Proposition~\ref{oiecendree},
\begin{equation*}
\begin{split}
\|(\ref{bleue2})\|_2 & = \left\| \int_1^t s e^{is\<D\>} T_{\frac{\chi_\mathcal{S}(\xi,\eta) \partial_\xi \phi\chi_{\mathcal{O}}(\eta)\chi_{\mathcal{O}}(\xi-\eta) }{i\phi\<\eta\>\< \xi-\eta \>}} \left( e^{\pm is\<D\>} \partial_s f , u\right) \,ds \right\|_2 \\
& \lesssim \int_1^t s \left\| T_{\frac{\chi_\mathcal{S}(\xi,\eta) \partial_\xi \phi\chi_{\mathcal{O}}(\eta)\chi_{\mathcal{O}}(\xi-\eta) }{i\phi\<\eta\>\< \xi-\eta \>}} \left( e^{\pm is\<D\>} \partial_s f , u\right) \,ds \right\|_{1} \\
& \lesssim \int_1^t s \|u\|_3 \left\|e^{\pm is\<D\>} \partial_s f\right\|_{3/2} \,ds\\
& \lesssim \|u\|_X^3 \int_1^t s \frac{1}{s^{3/2}} \,ds \lesssim \|u\|_X^3 \sqrt{t}.
\end{split}
\end{equation*}

\subsection{Estimate for $G$ in the norm $\sup_t t^{1-3\delta_1} \left\|\widetilde{\chi}_\mathcal{O}(D) e^{it\<D\>} G \right\|_{\left(\frac{1}{6}+\delta_1 \right)^{-1}}$}

\label{secestG}

Notice first that, since resonances are separated, the localization $\widetilde{\chi}_\mathcal{O}$ effectively cuts off space-time resonances. Thus the frequency space $(\xi,\eta)$ will be split into three parts: large frequencies (larger than $s^{\delta_3}$); small frequencies away from $\mathcal{T}$; small frequencies away from $\mathcal{S}$. More precisely, using $\theta$ defined in~(\ref{colvert})
\begin{subequations}
\begin{align}
\label{grebe1}
\widetilde{\chi}_\mathcal{O}(\xi) \widehat{G}(\xi) = & \int_1^t  \int \widetilde{\chi}_\mathcal{O}(\xi) \left[ 1 - \theta\left(\frac{(\xi,\eta)}{s^{\delta_3}}\right) \right]  e^{i s \phi} \frac{\widehat{f} (\eta)}{\<\eta\>} \frac{ \widehat{f} (\xi-\eta)}{\< \xi-\eta \>} \, d\eta\,ds \\
\label{grebe2}
& + \int_1^t  \int \widetilde{\chi}_\mathcal{O}(\xi) \chi_{\mathcal{S}}(\xi,\eta) \theta\left(\frac{(\xi,\eta)}{s^{\delta_3}}\right)e^{i s \phi} \frac{\widehat{f} (\eta)}{\<\eta\>} \frac{ \widehat{f} (\xi-\eta)}{\< \xi-\eta \>} \, d\eta\,ds \\
\label{grebe3}
& + \int_1^t  \int \widetilde{\chi}_\mathcal{O}(\xi) \chi_{\mathcal{T}}(\xi,\eta) \theta\left(\frac{(\xi,\eta)}{s^{\delta_3}}\right)e^{i s \phi} \frac{\widehat{f} (\eta)}{\<\eta\>} \frac{ \widehat{f} (\xi-\eta)}{\< \xi-\eta \>} \, d\eta\,ds 
\end{align}
\end{subequations}

\subsubsection{The high frequency term~(\ref{grebe1}).}
\label{merle}
It suffices in order to bound it to use the strong control on high frequencies. This is done by using the Littlewood-Paley decomposition, which is recalled in Section~\ref{LP}.
\begin{equation*}
\begin{split}
\mathcal{F}^{-1} (\ref{grebe1}) & = \int_1^t e^{\pm is\<D\>} T_{\frac{\widetilde{\chi}_\mathcal{O}(\xi) \left[ 1 - \theta\left(\frac{(\xi,\eta)}{s^{\delta_3}}\right) \right]}{\<\eta\>\< \xi-\eta \>}} (u,u) \,ds \\
& = \int_1^t \sum_{2^j \geq s^{\delta_3}} e^{\pm is\<D\>} P_{<j+3} T_{\frac{\widetilde{\chi}_\mathcal{O}(\xi) \left[ 1 - \theta\left(\frac{(\xi,\eta)}{s^{\delta_3}}\right) \right]}{\<\eta\>\< \xi-\eta \>}} (P_j u,P_{<j} u) \,ds \mbox{+ \{symmetric term\}}.
\end{split}
\end{equation*}
We now forget about the symmetric term, and use the fact that, by Proposition~\ref{oiecendree}, the elementary pseudo-product operators above are bounded for exponents satisfying the H\"older relation. Using in addition the dispersive estimate~(\ref{grue}) and the inequality~(\ref{deltajsobolev}) gives

\begin{equation*}
\begin{split}
& \left\| \int_1^t  \sum_{2^j \geq s^{\delta_3}} e^{\pm i(t-s)\<D\>}P_{<j+3} T_{\frac{\widetilde{\chi}_\mathcal{O}(\xi) \left[ 1 - \theta\left(\frac{(\xi,\eta)}{s^{\delta_3}}\right) \right]}{\<\eta\>\< \xi-\eta \>}} (P_j u,P_{<j} u) \,ds \right\|_{\left( \frac{1}{6} + \delta_1 \right)^{-1}} \\
& \;\;\;\;\;\;\;\;\;\;\;\;\;\;\; \lesssim \int_1^t \sum_{2^j\gtrsim s^{\delta_3}} \frac{1}{(t-s)^{1 - 3\delta_1}} \left\| P_{<j+3} T_{\frac{\widetilde{\chi}_\mathcal{O}(\xi) \left[ 1 - \theta\left(\frac{(\xi,\eta)}{s^{\delta_3}}\right) \right]}{\<\eta\>\< \xi-\eta \>}} (P_j u,P_{<j} u) \right\|_{W^{2,\left( \frac{5}{6} - \delta_1 \right)^{-1}}} \,ds \\
& \;\;\;\;\;\;\;\;\;\;\;\;\;\;\; \lesssim \int_1^t \sum_{2^j\gtrsim s^{\delta_3}}  \frac{1}{(t-s)^{1 - 3\delta_1}} 2^{2j} \left\| P_j u \right\|_2 \left\| P_{<j} u \right\|_{\left(\frac{1}{3} - \delta_1\right)^{-1}} \,ds\\
& \;\;\;\;\;\;\;\;\;\;\;\;\;\;\; \lesssim \int_1^t \sum_{2^j \gtrsim s^{\delta_3}} \frac{1}{(t-s)^{1 - 3\delta_1}} 2^{2j} 2^{-jN} \left\| u \right\|_{H^N} \left\| u \right\|_{\left(\frac{1}{3} - \delta_1\right)^{-1}} \,ds \\
& \;\;\;\;\;\;\;\;\;\;\;\;\;\;\; \lesssim \int_1^t  \frac{1}{(t-s)^{1 - 3\delta_1}} s^{\delta_3\left(2-N\right)} \frac{1}{s^{\frac{1}{2}+3\delta_1}}\,ds \lesssim \frac{1}{t^{1-3\delta_1}} \|u\|_X^2,
\end{split}
\end{equation*}
where the last inequality holds since
\begin{equation}
\label{chevalier1}
\delta_3 \left(N-2\right) + \frac{1}{2} + 3 \delta_1 > 1.
\end{equation}

\subsubsection{The term away from $\mathcal{T}$ (\ref{grebe2}).} In order to deal with this term, integrate by parts in time using the identity $\frac{1}{i\phi} \partial_s e^{is\phi} = e^{is\phi}$. Thus
\begin{subequations}
\begin{align}
\label{hirondelle1}
(\ref{grebe2}) = & \int \widetilde{\chi}_\mathcal{O}(\xi) \chi_{\mathcal{S}}(\xi,\eta) \theta\left(\frac{(\xi,\eta)}{t^{\delta_3}}\right) \frac{1}{i\phi} e^{i t \phi} \frac{\widehat{f} (\eta)}{\<\eta\>} \frac{ \widehat{f} (\xi-\eta)}{\< \xi-\eta \>} \, d\eta\\
\label{hirondelle2}
& - \int_1^t \int \widetilde{\chi}_\mathcal{O}(\xi) \chi_{\mathcal{S}}(\xi,\eta) \theta\left(\frac{(\xi,\eta)}{s^{\delta_3}}\right) \frac{1}{i\phi} e^{i s \phi}  \frac{\partial_s \widehat{f} (\eta)}{\<\eta\>} \frac{ \widehat{f} (\xi-\eta)}{\< \xi-\eta \>} \, d\eta\,ds \\
& \;\;\;\;\;\;\;\;\;\;\;\;\;\;\;\;\;\;\;\;\;\;\mbox{+ \{symmetric and easier terms\}}, 
\end{align}
\end{subequations}
where the ``symmetric and easier terms'' correspond to the cases where the partial derivative $\partial_s$ hits either the other $f$, or the cut-off function $ \theta\left(\frac{(\xi,\eta)}{s^{\delta_3}}\right)$; and to the boundary term at $s=1$. Using successively Bernstein's inequality~(\ref{lemmadeltaj}) and Proposition~\ref{toucan} gives
\begin{equation*}
\begin{split}
\left\| e^{it\<D\>} \mathcal{F}^{-1} (\ref{hirondelle1}) \right\|_{\left( \frac{1}{6} + \delta_1 \right)^{-1}} & = \left\|T_{\frac{\widetilde{\chi}_\mathcal{O}(\xi) \chi_{\mathcal{S}}(\xi,\eta) \theta\left(\frac{(\xi,\eta)}{s^{\delta_3}}\right)}{i\phi\<\eta\>\<\xi-\eta\>}} (u,u) \right\|_{\left( \frac{1}{6} + \delta_1 \right)^{-1}} \\
& \lesssim t^{\delta_3\left( \frac{3}{2} - 3\delta_1 \right)} \left\| T_{\frac{\widetilde{\chi}_\mathcal{O}(\xi) \chi_{\mathcal{S}}(\xi,\eta) \theta\left(\frac{(\xi,\eta)}{s^{\delta_3}}\right)}{i\phi\<\eta\>\<\xi-\eta\>}} (u,u) \right\|_{\left( \frac{2}{3} - 2 \delta_1 \right)^{-1}} \\
& \lesssim t^{\delta_3\left( \frac{3}{2} - 3\delta_1\right)} t^{A\delta_3} \|u\|_{\left( \frac{1}{3} - \delta_1 \right)^{-1}}^2 \\
& \lesssim t^{\delta_3\left( \frac{3}{2} - 3\delta_1\right)} t^{A\delta_3} \frac{1}{t^{1+6\delta_1}} \|u\|_X^2 \\
& \lesssim \frac{1}{t^{1-3\delta_1}} \|u\|_X^2,
\end{split}
\end{equation*}
where the last inequality holds since
\begin{equation}
\label{chevalier2}
9 \delta_1 > \delta_3 \left( A + \frac{3}{2} - 3\delta_1 \right).
\end{equation}
In order to estimate~(\ref{hirondelle2}), use successively the dispersive estimate~(\ref{grue}), the inequality~(\ref{deltajsobolev}), Bernstein's inequality~(\ref{lemmadeltaj}), and Proposition~\ref{toucan} to get
\begin{equation*}
\begin{split}
& \left\| e^{it\<D\>} \mathcal{F}^{-1} (\ref{hirondelle2}) \right\|_{\left( \frac{1}{6} + \delta_1 \right)^{-1}} = \left\|\int_1^t e^{i(t-s)\<D\>} T_{\frac{\widetilde{\chi}_\mathcal{O}(\xi) \chi_{\mathcal{S}}(\xi,\eta) \theta\left(\frac{(\xi,\eta)}{s^{\delta_3}}\right)}{i\phi\<\eta\>\<\xi-\eta\>}} \left(e^{\pm is\<D\>} (\partial_s f),u\right)\,ds \right\|_{\left( \frac{1}{6} + \delta_1 \right)^{-1}}  \\
& \;\;\;\;\;\;\;\;\;\;\;\;\; \lesssim \int_1^t \frac{1}{(t-s)^{1 - 3\delta_1}} s^{2 \delta_3} \left\| T_{\frac{\widetilde{\chi}_\mathcal{O}(\xi) \chi_{\mathcal{S}}(\xi,\eta) \theta\left(\frac{(\xi,\eta)}{s^{\delta_3}}\right)}{i\phi\<\eta\>\<\xi-\eta\>}} \left(e^{\pm is\<D\>} (\partial_s f),u\right) \right\|_{\left(\frac{5}{6}-\delta_1 \right)^{-1}} \,ds\\
& \;\;\;\;\;\;\;\;\;\;\;\;\; \lesssim \int_1^t \frac{1}{(t-s)^{1 - 3\delta_1}} s^{2 \delta_3} s^{\delta_3 \left( \frac{1}{6}-2\delta_1 \right)} \left\| T_{\frac{\widetilde{\chi}_\mathcal{O}(\xi) \chi_{\mathcal{S}}(\xi,\eta) \theta\left(\frac{(\xi,\eta)}{s^{\delta_3}}\right)}{i\phi\<\eta\>\<\xi-\eta\>}} \left(e^{\pm is\<D\>} (\partial_s f),u\right) \right\|_{\left(1-3\delta_1 \right)^{-1}} \,ds\\
& \;\;\;\;\;\;\;\;\;\;\;\;\; \lesssim \int_1^t \frac{1}{(t-s)^{1 - 3\delta_1}} s^{2 \delta_3} s^{\delta_3 \left(\frac{1}{6}-2\delta_1\right)} s^{A\delta_3} \left\| e^{\pm is\<D\>} (\partial_s f) \right\|_{\left(\frac{2}{3} - 2 \delta_1 \right)^{-1}} \left\| u \right\|_{\left(\frac{1}{3} - \delta_1 \right)^{-1}} \,ds\\
& \;\;\;\;\;\;\;\;\;\;\;\;\; \lesssim \int_1^t \frac{1}{(t-s)^{1 - 3\delta_1}} s^{2 \delta_3} s^{\delta_3 \left(\frac{1}{6}-2\delta_1\right)} s^{A\delta_3} \frac{1}{s^{1+6\delta_1}} \frac{1}{s^{\frac{1}{2}+3 \delta_1}} \|u\|_X^2 \,ds \\
& \;\;\;\;\;\;\;\;\;\;\;\;\; \lesssim \|u\|_X^2 \frac{1}{t^{1-3\delta_1}},
\end{split}
\end{equation*}
where the last inequality holds since
\begin{equation}
\label{chevalier3}
\frac{1}{2} + 9 \delta_1 > \delta_3 \left( A + \frac{13}{6} - 2\delta_1 \right).
\end{equation}

\subsubsection{The term away from $\mathcal{S}$ (\ref{grebe3})} First transform this term by an integration by parts using the identity
$\frac{\partial_\eta \phi}{is |\partial_\eta \phi|^2}\cdot\partial_\eta e^{is\phi} = e^{is\phi}$. This gives
\begin{equation*}
\begin{split}
(\ref{grebe3}) = \int_1^t  \int \widetilde{\chi}_\mathcal{O}(\xi) & \chi_{\mathcal{T}}(\xi,\eta) \theta\left(\frac{(\xi,\eta)}{s^{\delta_3}}\right) \frac{\partial_\eta \phi}{is |\partial_\eta \phi|^2}  e^{i s \phi} \frac{\partial_\eta \widehat{f} (\eta)}{\<\eta\>} \frac{ \widehat{f} (\xi-\eta)}{\< \xi-\eta \>} \, d\eta\,ds \\ 
& \mbox{ + \{symmetric and easier terms\}}.
\end{split}
\end{equation*}
or
\begin{equation*}
\begin{split}
e^{it\<D\>} \mathcal{F}^{-1} (\ref{grebe3}) = \int_1^t e^{i(t-s)\<D\>} & \frac{1}{s} T_{\frac{\widetilde{\chi}_\mathcal{O}(\xi) \chi_{\mathcal{T}}(\xi,\eta) \theta\left(\frac{(\xi,\eta)}{s^{\delta_3}}\right)\partial_\eta \phi}{|\partial_\eta \phi|^2 \<\eta\> \<\xi-\eta \>}} \left( e^{\pm is \<D\>} (xf),u\right) \,ds \\
& \mbox{ + \{symmetric and easier terms\}}.
\end{split}
\end{equation*}
With the help of the dispersive estimate~(\ref{grue}), the inequality~(\ref{deltajsobolev}) and the Proposition~\ref{toucan}, it can be estimated as follows:
\begin{equation*}
\begin{split}
& \left\| \int_1^t e^{i(t-s)\<D\>} \frac{1}{s} T_{\frac{\widetilde{\chi}_\mathcal{O}(\xi) \chi_{\mathcal{T}}(\xi,\eta) \theta\left(\frac{(\xi,\eta)}{s^{\delta_3}}\right)\partial_\eta \phi}{|\partial_\eta \phi|^2\<\eta\> \<\xi-\eta \>}} \left( e^{\pm is \<D\>} (xf),u\right) \,ds \right\|_{\left( \frac{1}{6} + \delta_1 \right)^{-1}} \\
& \;\;\;\;\;\;\;\;\;\;\;\;\;\;\;\;\;\;\lesssim \int_1^t  \frac{1}{(t-s)^{1-3\delta_1}} \frac{1}{s}s^{2 \delta_3} \left\| T_{\frac{\widetilde{\chi}_\mathcal{O}(\xi) \chi_{\mathcal{T}}(\xi,\eta) \theta\left(\frac{(\xi,\eta)}{s^{\delta_3}}\right)\partial_\eta \phi}{|\partial_\eta \phi|^2\<\eta\> \<\xi-\eta \>}} \left( e^{\pm is \<D\>} (xf),u\right) \right\|_{\left( \frac{5}{6} - \delta_1 \right)^{-1}}\,ds\\
& \;\;\;\;\;\;\;\;\;\;\;\;\;\;\;\;\;\;\lesssim \int_1^t \frac{1}{(t-s)^{1-3\delta_1}} \frac{1}{s} s^{2 \delta_3} s^{A\delta_3} \left\|xf\right\|_2 \left\|u\right\|_{\left(\frac{1}{3}-\delta_1 \right)^{-1}}\,ds\\
& \;\;\;\;\;\;\;\;\;\;\;\;\;\;\;\;\;\;\lesssim \int_1^t \frac{1}{(t-s)^{1-3\delta_1}} \frac{1}{s} s^{2 \delta_3} s^{A\delta_3} \|u\|_X^2 \sqrt{s} \frac{1}{s^{\frac{1}{2}+3\delta_1}} \,ds \\
& \;\;\;\;\;\;\;\;\;\;\;\;\;\;\;\;\;\;\lesssim \|u\|_X^2 \frac{1}{t^{1-3\delta_1}},
\end{split}
\end{equation*}
where the last inequality is true since
\begin{equation}
\label{chevalier4}
3\delta_1 > \delta_3 \left( A + 2 \right) > 0.
\end{equation}

\subsection{Estimate for $G$ in the norm $\sup_t t^{\frac{1}{2}+3\delta_1} \left\|e^{it\<D\>} G \right\|_{\left(\frac{1}{3}-\delta_1 \right)^{-1}}$}

First cut off within a distance of order $\delta_0$ of the space-time resonant set by writing
\begin{subequations}
\begin{align}
\label{macareux1}
\widehat{G}(\xi) = & \int_0^t \int \chi_{\mathcal{R}}^{1}(\xi,\eta) e^{i s \phi} \frac{\widehat{f} (\eta)}{\<\eta\>} \frac{ \widehat{f} (\xi-\eta)}{\< \xi-\eta \>} \, d\eta\,ds \\
\label{macareux2}
& + \int_0^t \int \left[ 1 - \chi_{\mathcal{R}}^{1}(\xi,\eta) \right] e^{i s \phi} \frac{ \widehat{f} (\eta)}{\<\eta\>} \frac{ \widehat{f} (\xi-\eta)}{\< \xi-\eta \>} \, d\eta\,ds.
\end{align}
\end{subequations}
The term~(\ref{macareux1}) corresponds to frequencies away from $\mathcal{R}$; it can be dealt with exactly as in Section~\ref{secestG}, even yielding a stronger estimate than needed. Thus it suffices to estimate~(\ref{macareux2}).

Since resonances are separated, it is possible to add Fourier multipliers $\widetilde{\chi}_\mathcal{O}(D)$ to the arguments of~(\ref{canard2}). Splitting furthermore the frequency space $(\xi,\eta)$, we get
\begin{subequations}
\begin{align}
\label{cormoran1}
(\ref{macareux2}) & = \int_1^t \int \chi_\mathcal{R}^{s^{-\delta_2}}(\xi,\eta) e^{i s \phi} \frac{\widetilde{\chi}_\mathcal{O}(\eta) \widehat{f} (\eta)}{\<\eta\>} \frac{\widetilde{\chi}_\mathcal{O}(\xi-\eta) \widehat{f} (\xi-\eta)}{\< \xi-\eta \>} \, d\eta\,ds \\
\label{cormoran2}
& + \int_1^t \int \chi_\mathcal{R}^{1}(\xi,\eta) \chi_\mathcal{S}^{s^{-\delta_2}}(\xi,\eta) \theta\left( \frac{(\xi,\eta)}{s^{\delta_3}} \right) e^{i s \phi} \frac{\widetilde{\chi}_\mathcal{O}(\eta) \widehat{f} (\eta)}{\<\eta\>} \frac{\widetilde{\chi}_\mathcal{O}(\xi-\eta) \widehat{f} (\xi-\eta)}{\< \xi-\eta \>} \, d\eta\,ds \\
\label{cormoran3}
& + \int_1^t \int \chi_\mathcal{R}^{1}(\xi,\eta)  \chi_\mathcal{T}^{s^{-\delta_2}}(\xi,\eta)   e^{i s \phi} \frac{\widetilde{\chi}_\mathcal{O}(\eta) \widehat{f} (\eta)}{\<\eta\>} \frac{\widetilde{\chi}_\mathcal{O}(\xi-\eta)  \widehat{f} (\xi-\eta)}{\< \xi-\eta \>} \, d\eta\,ds.
\end{align}
\end{subequations}

\subsubsection{The term close to $\mathcal{R}$~(\ref{cormoran1})}

In order to estimate this term, recall first that (see Lemma~\ref{aigle2}) 
$$
\chi_\mathcal{R}^{t^{-\delta_2}}(\xi,\eta) = \chi \left( t^{\delta_2} \left(|\eta|-R \right) \right) \chi \left( t^{\delta_2} \left(\xi - \lambda \eta \right) \right).
$$
Thus, it follows with the help of the dispersive estimate~(\ref{grue}) and Proposition~\ref{toucan} that
\begin{equation*}
\begin{split}
& \left\| e^{it\<D\>} \mathcal{F}^{-1} (\ref{cormoran1}) \right\|_{\left(\frac{1}{3}-\delta_1\right)^{-1}} \\
& \;\;\;\;\;\;\; = \left\| \int_1^t e^{i(t-s)\<D\>} T_{\frac{ \chi \left(s^{\delta_2} \left(\xi - \lambda \eta \right) \right)} {\<\eta\>\< \xi-\eta \>}} 
\left( e^{\pm is\<D\>} \chi\left( s^{\delta_2} \left( |D|-R \right)\right) \widetilde{\chi}_\mathcal{O}(D) f,\widetilde{\chi}_\mathcal{O}(D)u \right) \,ds \right\|_{\left(\frac{1}{3}-\delta_1\right)^{-1}} \\
& \;\;\;\;\;\;\;\lesssim \int_1^t \frac{1}{(t-s)^{\frac{1}{2}+3\delta_1}} \left\| \widetilde{\chi}_\mathcal{O}(D) \chi \left( s^{\delta_2} \left( |D|-R \right) \right) f \right\|_2 \left\|\widetilde{\chi}_\mathcal{O}(D) u \right\|_{\left(\frac{1}{6}+\delta_1 \right)^{-1}}\,ds \\
& \;\;\;\;\;\;\;\lesssim \int_1^t \frac{1}{(t-s)^{\frac{1}{2}+3\delta_1}} \frac{1}{s^{\delta_2/24}} \left\||x|^{1/8} \widetilde{\chi}_\mathcal{O}(D) f \right\|_2 \|u\|_X \frac{1}{s^{1-3\delta_1}} \,ds \\
& \;\;\;\;\;\;\;\lesssim  \|u\|_X^2 \int_1^t  \frac{1}{(t-s)^{\frac{1}{2}+3\delta_1}} \frac{1}{s^{\delta_2/24}} \frac{1}{s^{1-3\delta_1}}\,ds\\
& \;\;\;\;\;\;\;\lesssim \|u\|_X^2 \frac{1}{t^{\frac{1}{2}+3\delta_1}}
\end{split}
\end{equation*}
where the last inequality holds since
\begin{equation}
\label{chevalier5}
\frac{\delta_2}{24} > 3\delta_1.
\end{equation}

\subsubsection{The term away from $\mathcal{T}$~(\ref{cormoran2})} Integrate this term by parts via the identity $\frac{1}{i\phi} \partial_s e^{is\phi} = e^{is\phi}$ to get
\begin{subequations}
\begin{align}
\label{pinson1}
(\ref{cormoran2}) & = \int \chi_\mathcal{R}^{1}(\xi,\eta) \chi_\mathcal{S}^{t^{-\delta_2}}(\xi,\eta) e^{i t \phi} \frac{1}{i\phi} \frac{\widetilde{\chi}_\mathcal{O}(\eta) \widehat{f} (\eta)}{\<\eta\>} \frac{\widetilde{\chi}_\mathcal{O}(\xi-\eta) \widehat{f} (\xi-\eta)}{\< \xi-\eta \>} \, d\eta\\
\label{pinson2}
& - \int_1^t \int \chi_\mathcal{R}^{1}(\xi,\eta) \chi_\mathcal{S}^{s^{-\delta_2}}(\xi,\eta) e^{i s \phi} \frac{1}{i\phi} \frac{\widetilde{\chi}_\mathcal{O}(\eta) \partial_s \widehat{f} (\eta)}{\<\eta\>} \frac{\widetilde{\chi}_\mathcal{O}(\xi-\eta) \widehat{f} (\xi-\eta)}{\< \xi-\eta \>} \, d\eta\,ds\\
& \mbox{\;\;\;\;\;\;\;\;\;\;\;\;\;\;\;\;\;\;+ \{symmetric and easier terms \}}.
\end{align}
\end{subequations}
The first term can be estimated with the help of Bernstein's inequality~(\ref{lemmadeltaj}) and Proposition~\ref{toucan}:
\begin{equation*}
\begin{split}
\left\| e^{it\<D\>} \mathcal{F}^{-1} (\ref{pinson1}) \right\|_{\left(\frac{1}{3}-\delta_1\right)^{-1}} & = \left\| T_{ \frac{\chi_\mathcal{R}^{1}(\xi,\eta) \chi_\mathcal{S}^{t^{-\delta_2}}(\xi,\eta)}  {i\phi\<\eta\>\< \xi-\eta \>} } (\widetilde{\chi}_\mathcal{O}(D) u,\widetilde{\chi}_\mathcal{O}(D)u) \right\|_{\left(\frac{1}{3}-\delta_1\right)^{-1}} \\
& \lesssim \left\| T_{ \frac{\chi_\mathcal{R}^{1}(\xi,\eta) \chi_\mathcal{S}^{t^{-\delta_2}}(\xi,\eta)} {i\phi\<\eta\>\< \xi-\eta \>} } (\widetilde{\chi}_\mathcal{O}(D) u,\widetilde{\chi}_\mathcal{O}(D)u) \right\|_{2} \\
& \lesssim  t^{A\delta_2} \left\| \widetilde{\chi}_\mathcal{O}(D) u \right\|_4^2 \\
& \lesssim \|u\|_X^2  t^{A\delta_2} \frac{1}{t^{3/2}} \lesssim \|u\|_X^2 \frac{1}{t^{\frac{1}{2}+3\delta_1}},
\end{split}
\end{equation*}
where the last inequality holds since
\begin{equation}
\label{chevalier6}
3\delta_1 + A \delta_2 < 1.
\end{equation}
For the second term, use successively the dispersive estimate~(\ref{grue}), Bernstein's inequality~(\ref{lemmadeltaj}) and Proposition~\ref{toucan} to get
\begin{equation*}
\begin{split}
& \left\| e^{it\<D\>} \mathcal{F}^{-1} (\ref{pinson1}) \right\|_{\left(\frac{1}{3}-\delta_1\right)^{-1}} \\
& \;\;\;\;\;\;\;\;\;\;\;\;\;\;= \left\| \int_1^t e^{i(t-s)\<D\>} T_{ \frac{\chi_\mathcal{R}^{1}(\xi,\eta) \chi_\mathcal{S}^{s^{-\delta_2}}(\xi,\eta)} {i\phi\<\eta\>\< \xi-\eta \>} } (\widetilde{\chi}_\mathcal{O}(D) e^{\pm i s\<D\>} \partial_s f ,\widetilde{\chi}_\mathcal{O}(D)u) \right\|_{\left(\frac{1}{3}-\delta_1\right)^{-1}} \\
& \;\;\;\;\;\;\;\;\;\;\;\;\;\;\lesssim \int_1^t \frac{1} {(t-s)^{\frac{1}{2}+3\delta_1}} \left\| T_{ \frac{\chi_\mathcal{R}^{1}(\xi,\eta) \chi_\mathcal{S}^{s^{-\delta_2}}(\xi,\eta)} {i\phi\<\eta\>\< \xi-\eta \>} } (\widetilde{\chi}_\mathcal{O}(D) e^{\pm i s\<D\>} \partial_s f ,\widetilde{\chi}_\mathcal{O}(D)u) \right\|_1 \\
& \;\;\;\;\;\;\;\;\;\;\;\;\;\;\lesssim \int_1^t \frac{1}{(t-s)^{\frac{1}{2}+3\delta_1}} s^{A\delta_2} \left\| e^{\pm i s\<D\>} \partial_s f \right\|_{3/2} \left\| u \right\|_3\,ds \\
& \;\;\;\;\;\;\;\;\;\;\;\;\;\;\lesssim \|u\|_X^3 \int_1^t \frac{1}{(t-s)^{\frac{1}{2}+3\delta_1}} s^{A\delta_2} \frac{1}{s} \frac{1}{\sqrt{s}} \,ds \\
& \;\;\;\;\;\;\;\;\;\;\;\;\;\; \lesssim \|u\|_X^3 \frac{1}{t^{\frac{1}{2}+3\delta_1}},
\end{split}
\end{equation*}
where the last inequality is justified since
\begin{equation}
\label{chevalier7}
A\delta_2< \frac{1}{2}.
\end{equation}

\subsubsection{The term away from $\mathcal{S}$~(\ref{cormoran3})}Integrate this term by parts via the identity $\frac{\partial_\eta \phi}{is|\partial_\eta \phi|^2} \cdot \partial_\eta e^{is\phi} = e^{is\phi}$ to get
\begin{subequations}
\begin{align}
& (\ref{cormoran3}) \\ 
\label{pelican1}
& = - \int_1^t \int \chi_\mathcal{R}^{1}(\xi,\eta) \chi_\mathcal{T}^{s^{-\delta_2}}(\xi,\eta) \frac{ \partial_\eta \phi}{is|\partial_\eta \phi|^2} e^{i s \phi} \frac{\widetilde{\chi}_\mathcal{O}(\eta) \widehat{\partial_\eta f} (\eta)}{\<\eta\>} \frac{\widetilde{\chi}_\mathcal{O}(\xi-\eta) \widehat{f} (\xi-\eta)}{\< \xi-\eta \>} \, d\eta\,ds\\
\label{pelican2}
& \;\;\;\;\;\;\;\;\;\;\;\;\;\;\;\;\;\;\;\;\;\mbox{+ \{symmetric and easier terms\}}.
\end{align}
\end{subequations}
Next, using successively the dispersive estimate~(\ref{grue}), inequality~(\ref{deltajsobolev}) and Proposition~\ref{toucan} gives
\begin{equation*}
\begin{split}
& \left\|e^{it\<D\>} \mathcal{F}^{-1} (\ref{pelican1}) \right\|_{\left(\frac{1}{3}-\delta_1\right)^{-1}} \\
& \;\;\;\;\;\; = \left\| \int_1^t e^{i(t-s)\<D\>} \frac{1}{s} T_{\frac{\chi_\mathcal{R}^{1}(\xi,\eta) \chi_\mathcal{T}^{s^{-\delta_2}}(\xi,\eta) \partial_\eta \phi}{i|\partial_\eta \phi|^2\<\eta\>\< \xi-\eta \>}} 
\left( e^{\pm is\<D\> }\widetilde{\chi}_\mathcal{O}(D)(xf) , \widetilde{\chi}_\mathcal{O}(D)u  \right)\,ds \right\|_{\left(\frac{1}{3}-\delta_1\right)^{-1}} \\
& \;\;\;\;\;\;\lesssim \int_1^t \frac{1}{s} \frac{1} {(t-s)^{\frac{1}{2}+3\delta_1}} \left\| T_{\frac{\chi_\mathcal{R}^{1}(\xi,\eta) \chi_\mathcal{T}^{s^{-\delta_2}}(\xi,\eta) \partial_\eta \phi}{i|\partial_\eta \phi|^2\<\eta\>\< \xi-\eta \>}} \left( e^{\pm is\<D\>} \widetilde{\chi}_\mathcal{O}(D)(xf) , \widetilde{\chi}_\mathcal{O}(D)u  \right) \right\|_{\left( \frac{2}{3}+\delta_1\right)^{-1}} \,ds\\
& \;\;\;\;\;\;\lesssim \int_1^t \frac{1}{s} \frac{1}{(t-s)^{\frac{1}{2}+3\delta_1}} s^{A \delta_2} \left\|xf\right\|_2 \left\| \widetilde{\chi}_\mathcal{O}(D)u \right\|_{\left(\frac{1}{6} + \delta_1\right)^{-1}}\,ds \\
& \;\;\;\;\;\;\lesssim \|u\|_X^2 \int_1^t \frac{1}{s} \frac{1}{(t-s)^{\frac{1}{2}+3\delta_1}} s^{A \delta_2} \sqrt{s} \frac{1}{s^{1-3\delta_1}}\,ds \\
& \;\;\;\;\;\;\lesssim \|u\|_X^2 \frac{1}{t^{\frac{1}{2}+3\delta_1}},
\end{split}
\end{equation*}
where the last inequality is valid since
\begin{equation}
\label{chevalier8}
A\delta_2 + 3 \delta_1 < 1.
\end{equation}

\subsection{Estimate for $G$ in the norm $\sup_t \left\||x|^{1/8} \widetilde{\chi}_\mathcal{O} G \right\|_2$}

First notice that this estimate is far from being optimal (it seems likely that the best possible estimate is $\sup_t \left\||x|^{1/2-\epsilon} G \right\|_2 < \infty$ for $\epsilon>0$). In other words, we have a lot of room to our disposal, and we will perform very crude estimates, which simplifies some technical points.

Also observe that interpolating between~(\ref{canard1}) and~(\ref{canard4}) gives
$$
\||x|^{1/8} f \|_2 \lesssim t^{1/16}.
$$

As usual, we start by decomposing $G$ as follows
\begin{subequations}
\begin{align}
\label{sterne1}
\widetilde{\chi}_\mathcal{O}(\xi) \widehat{G}(\xi) = & \int_1^t  \int \widetilde{\chi}_\mathcal{O}(\xi) \left[ 1 - \theta\left(\frac{(\xi,\eta)}{s^{\delta_3}}\right) \right]  e^{i s \phi} \frac{\widehat{f} (\eta)}{\<\eta\>} \frac{ \widehat{f} (\xi-\eta)}{\< \xi-\eta \>} \, d\eta\,ds \\
\label{sterne2}
& + \int_1^t  \int \widetilde{\chi}_\mathcal{O}(\xi) \chi_{\mathcal{S}}(\xi,\eta) \theta\left(\frac{(\xi,\eta)}{s^{\delta_3}}\right)e^{i s \phi} \frac{\widehat{f} (\eta)}{\<\eta\>} \frac{ \widehat{f} (\xi-\eta)}{\< \xi-\eta \>} \, d\eta\,ds \\
\label{sterne3}
& + \int_1^t  \int \widetilde{\chi}_\mathcal{O}(\xi) \chi_{\mathcal{T}}(\xi,\eta) \theta\left(\frac{(\xi,\eta)}{s^{\delta_3}}\right)e^{i s \phi} \frac{\widehat{f} (\eta)}{\<\eta\>} \frac{ \widehat{f} (\xi-\eta)}{\< \xi-\eta \>} \, d\eta\,ds 
\end{align}
\end{subequations}

\subsubsection{The high frequency term~(\ref{sterne1})} 
First use the Littlewood-Paley decomposition recalled in Section~\ref{LP} to write
\begin{equation*}
\begin{split}
\mathcal{F}^{-1} (\ref{sterne1}) & = \int_1^t e^{\pm is\<D\>} T_{\frac{\widetilde{\chi}_\mathcal{O}(\xi) \left[ 1 - \theta\left(\frac{(\xi,\eta)}{s^{\delta_3}}\right) \right]}{\<\eta\>\< \xi-\eta \>}} (u,u) \,ds \\
& = \int_1^t \sum_{2^j \gtrsim s^{\delta_3} \log s} e^{\pm is\<D\>} P_{<j+3} T_{\frac{\widetilde{\chi}_\mathcal{O}(\xi) \left[ 1 - \theta\left(\frac{(\xi,\eta)}{s^{\delta_3}}\right) \right]}{\<\eta\>\< \xi-\eta \>}} (P_j u,P_{<j} u) \,ds \mbox{+ \{symmetric term\}}.
\end{split}
\end{equation*}
Next, we forget about the symmetric term, and use successively Lemma~\ref{kiwi}, Proposition~\ref{toucan}, Lemma~\ref{kiwi} again and finally the inequality~(\ref{deltajsobolev}) to get
\begin{equation*}
\begin{split}
& \left\||x|^{1/8} \int_1^t \sum_{2^j \gtrsim s^{\delta_3} } e^{\pm is\<D\>} P_{<j+3} T_{\frac{\chi_\mathcal{O}(\xi) \left[ 1 - \theta\left(\frac{(\xi,\eta)}{s^{\delta_3}}\right) \right]}{\<\eta\>\< \xi-\eta \>}} (P_j u,P_{<j} u) \,ds \right\|_{2} \\
& \;\;\;\;\;\;\;\;\;\;\;\;\;\;\;\;\;\;\;\;\;\;\;\;\;\lesssim \int_1^t  \sum_{2^j \gtrsim s^{\delta_3}} s^{1/8} \left\| |x|^{1/8} P_{<j+3} T_{\frac{\widetilde{\chi}_\mathcal{O}(\xi) \left[ 1 - \theta\left(\frac{(\xi,\eta)}{s^{\delta_3}}\right) \right]}{\<\eta\>\< \xi-\eta \>}} (P_j u,P_{<j} u) \right\|_2 \,ds \\
& \;\;\;\;\;\;\;\;\;\;\;\;\;\;\;\;\;\;\;\;\;\;\;\;\;\lesssim \int_1^t \sum_{2^j \gtrsim s^{\delta_3}} s^{1/8} \left\| |x|^{1/8} P_{<j} u \right\|_2 \|P_j u\|_\infty\,ds \\
& \;\;\;\;\;\;\;\;\;\;\;\;\;\;\;\;\;\;\;\;\;\;\;\;\;\lesssim \int_1^t \sum_{2^j \gtrsim s^{\delta_3}} s^{1/8} s^{1/8} \left\||x|^{1/8} f \right\|_2 2^{3j/2} 2^{-jN} \|u\|_{H^N} \,ds\\
& \;\;\;\;\;\;\;\;\;\;\;\;\;\;\;\;\;\;\;\;\;\;\;\;\;\lesssim \|u\|_X^2 \int_1^t s^{1/8} s^{1/8} s^{\delta_3\left(\frac{3}{2}-N \right)} s^{1/16}\,ds \|u\|_{H^N} \\
& \;\;\;\;\;\;\;\;\;\;\;\;\;\;\;\;\;\;\;\;\;\;\;\;\;\lesssim \|u\|_X^2 ,
\end{split}
\end{equation*}
where the last inequality holds true since
\begin{equation}
\label{chevalier9}
\delta_3 \left(N-\frac{3}{2} \right) > \frac{21}{16}.
\end{equation}

\subsubsection{The term away from $\mathcal{T}$~(\ref{sterne2})}
Integrating by parts with the help of the identity $\frac{1}{i\phi} \partial_s e^{is\phi} = e^{is\phi}$ gives
\begin{subequations}
\begin{align}
\label{perruche1}
(\ref{sterne2}) & = \int \widetilde{\chi}_\mathcal{O}(\xi) \chi_{\mathcal{S}}(\xi,\eta) \theta\left(\frac{(\xi,\eta)}{t^{\delta_3}}\right) \frac{1}{i\phi} e^{i t \phi} \frac{\widehat{f} (\eta)}{\<\eta\>} \frac{ \widehat{f} (\xi-\eta)}{\< \xi-\eta \>} \, d\eta\\
\label{perruche2}
& - \int_1^t \int \widetilde{\chi}_\mathcal{O}(\xi) \chi_{\mathcal{S}}(\xi,\eta) \theta\left(\frac{(\xi,\eta)}{s^{\delta_3}}\right) \frac{1}{i\phi} e^{i t \phi}  \frac{\partial_s \widehat{f} (\eta)}{\<\eta\>} \frac{ \widehat{f} (\xi-\eta)}{\< \xi-\eta \>} \, d\eta\,ds \\
& \mbox{+ \{symmetric and easier terms\}}.
\end{align}
\end{subequations}
In order to treat~(\ref{perruche1}), observe that the arguments of the pseudo-product have frequency of order less than $t^{\delta_3}$. Thus it is possible to add to them a Fourier multiplier $P_{<\delta_3 \log t + C}$ for a constant $C$ (see Section~\ref{LP} for the definition of the projections $P$.
Using Proposition~\ref{toucan}, Bernstein's inequality~(\ref{lemmadeltaj}) and Lemma~\ref{kiwi} yields the desired bound for (\ref{perruche1}):
\begin{equation*}
\begin{split}
\left\| |x|^{1/8} \mathcal{F}^{-1} (\ref{perruche1}) \right\|_2 & = \left\| |x|^{1/8} T_{\frac{\widetilde{\chi}_\mathcal{O}(\xi) \chi_{\mathcal{S}}(\xi,\eta) \theta\left(\frac{(\xi,\eta)}{t^{\delta_3}}\right)} {i\phi\<\eta\>\<\xi-\eta\>}} (u,P_{<\delta_3 \log t + C}u) \right\|_2 \\
& \lesssim t^{A\delta_3} \left\| \<x\>^{1/8} u \right\|_2 \left\|P_{<\delta_3 \log t + C} u \right\|_\infty \\ 
& \lesssim t^{A\delta_3} t^{1/8} \left\| \<x\>^{1/8} f \right\|_2 t^{\delta_3} \left\| u \right\|_3 \\
& \lesssim \|u\|_X^2 t^{A\delta_3} t^{1/8} t^{1/8} t^{\delta_3} \frac{1}{\sqrt{t}} \\
&  \lesssim \|u\|_X^2,
\end{split}
\end{equation*}
where the last inequality holds since
\begin{equation}
\label{chevalier10}
\delta_3 (A+1) < \frac{5}{16}.
\end{equation}
As far as~(\ref{perruche2}) is concerned, it is still possible to add a Fourier multiplier $P_{<\delta_3 \log t + C}$  to the arguments of the pseudo-product. Using successively Lemma~\ref{kiwi}, Proposition~\ref{toucan}, Bernstein's inequality~\ref{lemmadeltaj}, and once again Lemma~\ref{kiwi} gives
\begin{equation*}
\begin{split}
& \left\| |x|^{1/8} \mathcal{F}^{-1} (\ref{perruche1}) \right\|_2 = \left\| |x|^{1/8} \int_1^t T_{\frac{\widetilde{\chi}_\mathcal{O}(\xi) \chi_{\mathcal{S}}(\xi,\eta) \theta\left(\frac{(\xi,\eta)}{s^{\delta_3}}\right)}{i\phi \<\eta\>\<\xi-\eta\>}} \left(P_{<\delta_3 \log t + C} e^{\pm i s \<D\>} \partial_s f,u \right) \,ds \right\|_2\\
& \;\;\;\;\;\;\;\;\;\;\;\;\;\;\;\;\;\;\;\;\;\lesssim \left\| s^{\delta_3} \left[ \<x\> + s \right]^{1/8} T_{\frac{\widetilde{\chi}_\mathcal{O}(\xi) \chi_{\mathcal{S}}(\xi,\eta) \theta\left(\frac{(\xi,\eta)}{s^{\delta_3}}\right)}{i\phi \<\eta\>\<\xi-\eta\>} } \left(P_{<\delta_3 \log t + C} e^{\pm i s \<D\>} \partial_s f,u \right) \right\|_{L^2([1,t], L^{6/5})} \\
& \;\;\;\;\;\;\;\;\;\;\;\;\;\;\;\;\;\;\;\;\;\lesssim \left\| s^{\delta_3} s^{1/8} s^{A\delta_3} \left\| P_{<\delta_3 \log t + C} e^{\pm i s \<D\>} \partial_s f \right\|_{L^3_x} \|\< x \>^{1/8} u \|_{L^2_x} \right\|_{L^2([1,t])} \\
& \;\;\;\;\;\;\;\;\;\;\;\;\;\;\;\;\;\;\;\;\;\lesssim \left\| s^{\delta_3} s^{1/8} s^{A\delta_3} s^{\delta_3} \|e^{\pm i s \<D\>} \partial_s f\|_{L^{3/2}_t} \| \<x\>^{1/8} f \|_{L^2_x} \right\|_{L^2([1,t])}\\
& \;\;\;\;\;\;\;\;\;\;\;\;\;\;\;\;\;\;\;\;\;\lesssim \|u\|_X^3 \left\| s^{\delta_3} s^{1/8} s^{A\delta_3} s^{\delta_3} \frac{1}{s} s^{1/8} s^{1/16} \right\|_{L^2([1,t])}\\
& \;\;\;\;\;\;\;\;\;\;\;\;\;\;\;\;\;\;\;\;\;\lesssim \|u\|_X^3,
\end{split}
\end{equation*}
where the last inequality is valid since
\begin{equation}
\label{chevalier11}
(A+2) \delta_3 < \frac{3}{16}.
\end{equation}

\subsubsection{The term away from $\mathcal{S}$~(\ref{sterne3})} Integrating by parts with the help of the identity
$\frac{\partial_\eta \phi}{is|\partial_\eta \phi|^2} \cdot \partial_\eta e^{is\phi} = e^{is\phi}$ gives
\begin{subequations}
\begin{align}
& (\ref{sterne3}) \\
\label{autruche1}
& = - \int_1^t \int \widetilde{\chi}_\mathcal{O}(\xi) \chi_\mathcal{T}(\xi,\eta) \theta\left(\frac{(\xi,\eta)}{s^{\delta_3}}\right) \frac{\partial_\eta \phi}{is|\partial_\eta \phi|^2} e^{i s \phi} \frac{ \widehat{\partial_\eta f} (\eta)}{\<\eta\>} \frac{(\xi-\eta) \widehat{f} (\xi-\eta)}{\< \xi-\eta \>} \, d\eta\,ds \\
\label{autruche2}& \;\;\;\;\;\;\;\;\;\;\;\;\;\;\;\;\;\;\;\;\;\mbox{+ \{symmetric and easier terms\}}.
\end{align}
\end{subequations}
Applying successively Lemma~\ref{kiwi}, Bernstein's inequality~(\ref{lemmadeltaj}), Proposition~\ref{toucan}, and once again Lemma~\ref{kiwi} gives
\begin{equation*}
\begin{split}
\left\| |x|^{1/8} \mathcal{F}^{-1} (\ref{autruche1}) \right\|_2 &= \left\| |x|^{1/8} \int_1^t e^{is\<D\>} \frac{1}{s}  T_{\frac{\widetilde{\chi}_\mathcal{O}(\xi) \chi_\mathcal{T}(\xi,\eta) \partial_\eta \phi\theta\left(\frac{(\xi,\eta)}{s^{\delta_3}}\right)}{i|\partial_\eta \phi|^2\<\eta\>\< \xi-\eta \>}} (u,u) \right\|_2 \\
& \lesssim \int_1^t \frac{1}{s}s^{1/8} s^{\delta_3} \left\| |x|^{1/8} T_{\frac{\widetilde{\chi}_\mathcal{O}(\xi) \chi_\mathcal{T}(\xi,\eta) \partial_\eta \phi\theta\left(\frac{(\xi,\eta)}{s^{\delta_3}}\right)}{i|\partial_\eta \phi|^2\<\eta\>\< \xi-\eta \>}} (u,u) \right\|_{6/5}\,ds \\
& \lesssim \int_1^t \frac{1}{s}s^{1/8} s^{\delta_3} s^{A \delta_3} \left\| \<x\>^{1/8} u \right\|_2 \|u\|_3 \,ds \\
& \lesssim \int_1^t \frac{1}{s}s^{1/8} s^{\delta_3} s^{A \delta_3} s^{1/8} \left\|\<x\>^{1/8} f \right\|_2 \|u\|_3 \,ds \\
& \lesssim \|u\|_X^2 \int_1^t  \frac{1}{s} s^{1/8} s^{\delta_3} s^{A \delta_3} s^{1/8} s^{1/16} \frac{1}{\sqrt{s}} \,ds \lesssim \|u\|_X^2,
\end{split}
\end{equation*}
where the last inequality holds since
\begin{equation}
\label{chevalier12}
\delta_3(A+1) < \frac{3}{16}.
\end{equation}

\section{Auxiliary tools}

We will review or prove below a few estimates used in the proof of our main theorem.

\subsection{Littlewood-Paley decomposition}

\label{LP}

Consider $\psi$ a function supported in the annulus $\mathcal{C}(0,\frac{3}{4},\frac{8}{3})$ such that
$$
\mbox{for $\xi \neq 0$,}\;\;\;\;\sum_{j \in \mathbb{Z}} \psi \left( \frac{\xi}{2^j} \right) = 1 .
$$
Define first
$$
\Phi(\xi) \overset{def}{=} \sum_{j <0} \psi \left( \frac{\xi}{2^j} \right)
$$
and then the Fourier multipliers
$$
P_j \overset{def}{=} \psi \left( \frac{D}{2^j} \right) \;\;\;\;\; P_{<j} = \Phi \left( \frac{D}{2^j} \right).
$$
This gives a homogeneous and an inhomogeneous decomposition of the identity (for instance, in $L^2$)
$$
\sum_{j \in \mathbb{Z}} P_j = \operatorname{Id} \;\;\;\;\mbox{and}\;\;\;\;\;P_{<0} + \sum_{j \geq 0} P_j = \operatorname{Id}.
$$
All these operators are bounded on $L^p$ spaces:
$$
\mbox{if $1<p<\infty$,}\;\;\;\; \|P_j f \|_p \lesssim \|f\|_p \;\;\;\;,\;\;\;\; \|P_{<j} f \|_p \lesssim \|f\|_p.
$$
It is easy to see that
\begin{equation}
\label{deltajsobolev}
\mbox{if $j \geq 0$,}\;\;\;\;\|P_j f\|_{W^{s,p}} \sim 2^{js} \|f\|_{p} \;\;\;\;\mbox{and}\;\;\;\;\|P_{<j} f \|_{W^{s,p}} \lesssim 2^{js} \|f\|_{p}.
\end{equation}
Also recall Bernstein's lemma: if $1\leq q\leq p \leq \infty$,
\begin{equation}
\label{lemmadeltaj}
\|P_j f \|_p \leq 2^{3j\left( \frac{1}{q}-\frac{1}{p} \right)} \left\| P_j f \right\|_q\;\;\;\;\;\;\mbox{and}\;\;\;\;\; \left\| P_{<j} f \right\|_p \leq 2^{3j\left( \frac{1}{q}-\frac{1}{p} \right)} \left\| P_{<j} f \right\|_q .
\end{equation}

\subsection{Estimates for the linear Klein-Gordon equation}

We will consider here the linear Klein-Gordon equation for $c=0$, but of course the results presented in this section remain unchanged if $c>0$.

The lack of homogeneity of the Klein-Gordon dispersion relation gives rise to a wide range of dispersive or Strichartz estimates; roughly speaking, it admits ``wave like'' estimates, but also ``Schr\"odinger like'' estimates with a loss of derivatives.

Thus we do not attempt to give complete references, but simply borrow from Ginibre and Velo~\cite{GV} for dispersive estimates and from Ibrahim, Masmoudi and Nakanishi~\cite{IMN} for Strichartz estimates.

The dispersive estimates we will need read
\begin{equation}
\label{grue}
\left\| e^{it\<D\>} f \right\|_p \lesssim t^{\frac{3}{p}-\frac{3}{2}} 
\|f\|_{W^{ 4 \left( \frac{1}{2} - \frac{1}{p} \right) + \epsilon, p'}}
\;\;\;\;\;\;\mbox{if $2\leq p \leq \infty$ and $\epsilon>0$.}
\end{equation}
As for the Strichartz estimates, we state them in a very particular case:
\begin{equation}
\label{mesange}
\left\| \int_0^t e^{is\<D\>} F(s)\,ds \right\|_2 \lesssim \left\| F \right\|_{L^{\left( \frac{1}{2} + \frac{3}{2}\delta \right)^{-1} } W^{\frac{5}{6}-\frac{5}{2}\delta+\epsilon,\left( \frac{5}{6}-\delta \right)^{-1}}}  \;\;\;\;\;\;\mbox{for $\epsilon>0$ and $0\leq \delta \leq \frac{1}{3}$}.
\end{equation}

We now turn to weighted versions of the above:
\begin{lem} 
\label{kiwi}
(i) If $w \geq 0$,
$$
\left\| |x|^w e^{it\<D\>} f \right\|_{2} \lesssim t^w \|f\|_2 + \||x|^w f\|_2.
$$
(ii) If $w,\epsilon>0$, $R\geq 1$, and $\varphi \in \mathcal{C}^\infty_0$,
$$
\left\| |x|^w \varphi\left( \frac{D}{R} \right) \int_0^\infty e^{is\<D\>} F(s) \,ds \right\|_2 \lesssim R^{\frac{5}{6}+\epsilon} \left\| \left[ \< s \> + \<x\> \right]^w F \right\|_{L^2 L^{6/5}}.
$$
(iii) If $w,\epsilon>0$, and $\varphi \in \mathcal{C}^\infty_0$,
$$
\left\| |x|^w  \int_1^\infty \varphi \left( \frac{D}{s^\alpha} \right) e^{is\<D\>} F(s) \,ds \right\|_2 \lesssim \left\| s^{\alpha \frac{5}{6} + \epsilon} \left[ \< s \> + \<x\> \right]^w F(s) \right\|_{L^2 L^{6/5}}.
$$
\end{lem}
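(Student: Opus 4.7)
For integer $w$, pass to the Fourier side: multiplication by $x_j$ becomes $i\partial_{\xi_j}$, and
$$
\partial_{\xi_j}\bigl(e^{it\<\xi\>}\widehat{f}(\xi)\bigr) = it\,\xi_j\<\xi\>^{-1}\, e^{it\<\xi\>}\widehat{f}(\xi) + e^{it\<\xi\>}\partial_{\xi_j}\widehat{f}(\xi),
$$
so by Plancherel $\|x_j e^{it\<D\>}f\|_2 \leq t\|f\|_2 + \|x_j f\|_2$. Iterating yields $\||x|^w e^{it\<D\>} f\|_2 \lesssim \sum_{j\leq w} t^j \||x|^{w-j} f\|_2$, and the intermediate terms are absorbed into $t^w\|f\|_2 + \||x|^w f\|_2$ by Young's inequality combined with the H\"older interpolation $\||x|^{w-j} f\|_2 \leq \||x|^w f\|_2^{(w-j)/w}\|f\|_2^{j/w}$. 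For non-integer $w$, the bound follows from the integer case by Stein complex interpolation applied to the analytic family $|x|^z e^{it\<D\>}$, with a Gaussian convergence factor $e^{z^2}$ to control the imaginary axis.

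\textbf{Part (ii).} The starting point is the Strichartz estimate (\ref{mesange}) taken at $\delta = 0$, which reads $\|\int e^{is\<D\>}F\,ds\|_2 \lesssim \|F\|_{L^2 W^{5/6+\epsilon,6/5}}$. Inserting a fattened frequency cutoff $\widetilde{\varphi}(\xi/R)$ that equals $1$ on $\mathrm{supp}\,\varphi(\cdot/R)$, Bernstein's inequality (\ref{lemmadeltaj}) converts the Sobolev norm into $R^{5/6+\epsilon}\|F\|_{L^2 L^{6/5}}$, which yields the unweighted version of the bound. To incorporate the weight $|x|^w$, I would rewrite
$$
|x|^w\varphi(D/R)e^{is\<D\>}F \;=\; \mathcal{F}^{-1}\bigl(|\nabla_\xi|^w [\,\varphi(\xi/R)\,e^{is\<\xi\>}\,\widehat{F}(\xi)\,]\bigr)
$$
and apply a fractional Leibniz rule to the product inside: derivatives hitting $\varphi(\xi/R)$ produce harmless factors $\lesssim R^{-1}\leq 1$, those hitting $e^{is\<\xi\>}$ produce factors $\lesssim \<s\>$, and those hitting $\widehat{F}$ correspond to $|x|$ acting on $F$. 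This effectively replaces the weight $|x|^w$ by $(\<s\>+\<x\>)^w$ applied to $F$, and combining with the Strichartz/Bernstein bound above yields the claim.

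\textbf{Part (iii).} Decompose the time integral dyadically, $[1,\infty) = \bigcup_{j\geq 0} I_j$ with $I_j = [2^j,2^{j+1}]$. On $I_j$ the cutoff $\varphi(D/s^\alpha)$ is essentially $\varphi(D/2^{j\alpha})$ (up to a fattened bump), so part (ii) applied with $R \sim 2^{j\alpha}$ and some small $\epsilon' > 0$ gives
$$
\left\||x|^w \int_{I_j} \varphi(D/s^\alpha) e^{is\<D\>} F(s)\,ds\right\|_2 \lesssim 2^{j\alpha(5/6+\epsilon')} \|(\<s\>+\<x\>)^w F\|_{L^2(I_j; L^{6/5})}.
$$
Choosing $\epsilon'$ so that $\alpha\epsilon' < \epsilon/2$ and summing in $j$ via Cauchy--Schwarz against the convergent geometric factor $2^{-j(\epsilon-\alpha\epsilon')}$ repackages the right-hand side into the desired norm $\|s^{\alpha\cdot 5/6+\epsilon}(\<s\>+\<x\>)^w F\|_{L^2 L^{6/5}}$.

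\textbf{Main obstacle.} The subtle step is the fractional Leibniz argument in (ii): making rigorous the replacement of $|x|^w \varphi(D/R) e^{is\<D\>}$ by $\varphi(D/R) e^{is\<D\>}(\<s\>+\<x\>)^w$, uniformly in $R \geq 1$ and $s \geq 0$. This requires a Kato--Ponce-type commutator estimate together with the symbol bounds $|\partial_\xi^\beta\varphi(\xi/R)| \lesssim R^{-|\beta|}$ and $|\partial_\xi^\beta e^{is\<\xi\>}| \lesssim \<s\>^{|\beta|}$. The hypothesis $R \geq 1$ enters precisely to ensure that derivatives falling on $\varphi(\xi/R)$ are harmless.
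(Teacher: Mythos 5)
Your part (i) takes a genuinely different route from the paper's. You work on the Fourier side, iterating the Leibniz rule $\partial_\xi\bigl(e^{it\langle\xi\rangle}\widehat f\bigr) = it\,\xi\langle\xi\rangle^{-1}e^{it\langle\xi\rangle}\widehat f + e^{it\langle\xi\rangle}\partial_\xi\widehat f$ for integer $w$ and invoking Stein interpolation for non-integer $w$; the paper instead works in physical space, using the kernel bound $|E(y,t)|\lesssim \langle\,||y|-|t||\,\rangle^{-N}$ for the propagator and splitting into the ball $|x|\lesssim t$ (where the truncated kernel is supported in $B(0,20t)$ and uniformly $L^2$-bounded) and its complement (where rapid decay of $E$ controls the weight). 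Both are sound; yours avoids the kernel estimate at the cost of a complex-interpolation step for fractional $w$, which you describe only loosely.

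Your part (ii) has a real gap, and you flag it yourself as the ``main obstacle.'' The unweighted bound (Strichartz (\ref{mesange}) at $\delta=0$ plus Bernstein on a fattened cutoff) is correct, but for the weight $|x|^w$ you propose a fractional Leibniz / Kato--Ponce commutator step that you do not carry out, and which would be genuinely delicate: once $|\nabla_\xi|^w$ has been distributed fractionally over $\varphi(\xi/R)\,e^{is\langle\xi\rangle}\,\widehat F(\xi)$, the resulting pieces are no longer manifestly of the form $e^{is\langle\xi\rangle}\cdot(\cdot)$, so it is not immediate that the Strichartz estimate can be re-applied inside the time integral. The paper sidesteps this entirely: it proves (ii) only for $w=0$ (the unweighted bound) and $w=1$ via the \emph{exact} commutation identity
\[
x\,e^{it\langle D\rangle}f = e^{it\langle D\rangle}\Bigl[\,t\,\tfrac{D}{\langle D\rangle}f + xf\,\Bigr],
\]
which needs no Leibniz rule at all and puts the integrand back in a form to which (\ref{mesange}) applies, then interpolates. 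This gives the result for $0\le w\le 1$, which is all the paper ever uses ($w=1/8$). Restricting to that range and using the commutation identity removes the hard step from your argument.

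Your part (iii) --- dyadic decomposition of $[1,\infty)$ in $s$, application of (ii) on each block with $R\sim 2^{j\alpha}$, then summation using a little slack in $\epsilon$ --- is essentially the same as the paper's.
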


\begin{proof} \underline{Assertion (i).}
It is well-known that the kernel $E$ of $e^{it\<D\>}$ is smooth outside of the set $\{|x|=t\}$ and rapidly decaying for $|x|>>|t|$:
\begin{equation}
\label{goeland}
|E(y,t)| \lesssim \frac{1}{\< |y|-|t|| \>_+^N} \;\;\;\;\;\mbox{for any $N$}
\end{equation}
(see for instance H\"ormander~\cite{H} for a closely related statement). In deriving the desired estimate, we will use a smooth cut-off function $\chi$, which is equal to $1$ in $B(0,10)$, and to $0$ in $B(0,20)^c$. It gives the splitting
\begin{equation*}
E(t) * f = \left[\chi \left( \frac{\cdot}{t} \right) E(t)\right] * f + \left[ \left( 1 - \chi \left( \frac{\cdot}{t} \right)\right) E(t) \right]  * f.
\end{equation*}
In order to estimate the first term, observe that the operator with kernel $\chi \left( \frac{\cdot}{t} \right) E(t)$ is uniformly bounded on $L^2$, and has support in $B(0,20t)$. It follows easily (for instance, by decomposing $\mathbb{R}^3$ into coronas with radii of order $nt$, with $n \in \mathbb{N}$) that
$$
\left\|\left[ \chi \left( \frac{\cdot}{t} \right) E(t)\right] * f \right\|_{L^2(\<x\>^{2w}\,dx)} \lesssim t^{w} \left\| f \right\|_{2}.
$$
As for the second term, notice that its norm in $L^2 (\<x\>^{2w}\,dx)$ equals the $L^2$ norm of
$$
h(x) = \int \frac{\<x\>^{w}}{\<y\>^{w}} \left[ 1 - \chi \left( \frac{x-y}{t} \right) \right] E(x-y,t) g(y) \,dy
$$
where $g = \<x\>^{w} f$ is in $L^2$. Due to inequality~(\ref{goeland}), the kernel above can be bounded by
$$
\left| \frac{\<x\>^{w}}{\<y\>^{w}} \left[ 1 - \chi \left( \frac{x-y}{t} \right) \right] E(x-y,t) \right| \lesssim \frac{1}{\< |x-y| \>^{N-w}}
$$
which is in $L^1$ for $N$ big enough. Therefore, $\|h\|_2 \lesssim \|g\|_2$,
hence the desired estimate.

\bigskip \noindent \underline{Assertion (ii).}
It suffices to prove it for $w=0$ and $w=1$, and to interpolate. The case $w=0$ is (\ref{mesange}); for the case $w=1$, use the identity $x e^{it\<D\>_c} f = e^{it\<D\>} \left[ t \frac{D}{\<D\>} f + xf \right]$ to get
\begin{equation*}
\begin{split}
&\left\| x \int_0^\infty e^{is\<D\>} \varphi\left( \frac{D}{R} \right) F(s)\,ds \right\|_2  = \left\|\int_0^\infty e^{is\<D\>} \left[ s \frac{D}{\<D\>}\varphi\left( \frac{D}{R} \right) F(s) + x \varphi\left( \frac{D}{R} \right) F(s) \right] \,ds \right\|_2 \\
& \;\;\;\;\;\;\;\;\;\;\;\;\;\;\;\;\;\lesssim  \left\|s \varphi\left( \frac{D}{R} \right) F(s)\right\|_{L^{2} W^{\frac{5}{6} + \epsilon,6/5}} + \left\|x \varphi\left( \frac{D}{R} \right) F(s)\right\|_{L^{2} W^{\frac{5}{6} + \epsilon,6/5}} \\
& \;\;\;\;\;\;\;\;\;\;\;\;\;\;\;\;\;\lesssim R^{5/6+\epsilon} \left[ \left\|s F(s)\right\|_{L^{2} L^{6/5}} + \left\| \<x\> F (s)\right\|_{L^2 L^{6/5}} \right] \\
& \;\;\;\;\;\;\;\;\;\;\;\;\;\;\;\;\;\lesssim R^{5/6+\epsilon} \left\| \left[ \< s \> + \<x\> \right] F \right\|_{L^2 L^{6/5}}
\end{split}
\end{equation*}
(the first inequality above follows from~(\ref{mesange}) and the boundedness of the operator $ \frac{D}{\<D\>}$ over $L^p$ spaces; in the second, we use $\|\<x\> \varphi(D/T) w\|_p \lesssim \|\<x\> w\|_p$).

\bigskip \noindent \underline{Assertion (iii).} Using the point (ii) that was just proved and the inequality $\|\<x\> \varphi(D/T) w\|_p \lesssim \|\<x\> w\|_p$ gives, taking $C_0$ large enough,
\begin{equation*}
\begin{split}
&\left\| |x|^w \int_0^\infty \varphi \left( \frac{D}{s^\alpha} \right) e^{is\<D\>} F(s) ds \right\|_2  \lesssim \sum_{j\geq 0} \left\| |x|^w P_{<\alpha j + C_0} \int_{2^j}^{2^{j+1}}  e^{is\<D\>} \varphi \left( \frac{D}{s^\alpha} \right)F(s) ds \right\|_2 \\
& \;\;\;\;\;\;\;\;\;\;\;\;\;\;\;\;\;\;\;\;\;\;\lesssim \sum_j 2^{\left( \frac{5}{6}+ \epsilon \right)\alpha j} \left\| \left[ \< s \> + \<x\> \right]^w \varphi \left( \frac{D}{s^\alpha} \right) F(s) \right\|_{L^2 \left( [2^j,2^{j+1}], L^{6/5} \right)} \\
& \;\;\;\;\;\;\;\;\;\;\;\;\;\;\;\;\;\;\;\;\;\;\lesssim \sum_j 2^{\left( \frac{5}{6}+ \epsilon \right)\alpha j} \left\| \left[ \< s \> + \<x\> \right]^w F(s) \right\|_{L^2 \left( [2^j,2^{j+1}], L^{6/5} \right)}\\
& \;\;\;\;\;\;\;\;\;\;\;\;\;\;\;\;\;\;\;\;\;\;\lesssim \left\| s^{\frac{5}{6}\alpha + (\alpha+1) \epsilon} \left[ \< s \> + \<x\> \right]^w F(s) \right\|_{L^2 L^{6/5}}.
\end{split}
\end{equation*}
\end{proof}

\subsection{Pseudo-product operators}

Recall the definition (which was introduced by Coifman and Meyer~\cite{CM}) of the pseudo-product operator with symbol $m(\xi,\eta)$:
$$
T_m(f,g) \overset{def}{=} \mathcal{F}^{-1} \int m(\xi,\eta) \widehat{f}(\eta) \widehat{f}(\xi-\eta) \,d\eta.
$$
The following proposition gives essentially the simplest framework for which boundedness between Lebesgue spaces can be established.

\begin{prop}
\label{oiecendree}
(i) If the Lebesgue exponents $p,q,r$ satisfy the H\"older relation $\frac{1}{p}+\frac{1}{q}=\frac{1}{r}$, then
$$
\left\| T_m(f,g) \right\|_r \lesssim \left\| \widehat{m} \right\|_1 \|f\|_p \|g\|_q.
$$
(where $\widehat{m}$ is the 6-dimensional Fourier transform of $m$).

\medskip
(ii) Still assuming that $\frac{1}{p}+\frac{1}{q}=\frac{1}{r}$, for $w \geq 0$,
$$
|x|^w \lesssim \left\||\alpha|^w \widehat{m}(\alpha,\beta) \right\|_1 \|f\|_p \|g\|_q + \|\widehat{m}\|_1 \|f\|_p \left\||x|^w g(x) \right\|_q .
$$
\end{prop}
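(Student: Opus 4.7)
The plan is to represent the symbol $m$ through its own Fourier transform and thereby express $T_m(f,g)$ as a weighted superposition of pointwise products of translates of $f$ and $g$. Writing
$$
m(\xi,\eta) = c \int \widehat{m}(\alpha,\beta)\, e^{i(\alpha\xi + \beta\eta)} \, d\alpha\,d\beta
$$
(Fourier inversion in six variables, with $c$ an explicit constant depending only on the normalization) and substituting into the definition of $T_m$, I would swap the order of integration. For each fixed $(\alpha,\beta)$, the inner expression $\mathcal{F}^{-1}_{\xi\to x}\int e^{i(\alpha\xi+\beta\eta)} \widehat{f}(\eta)\widehat{g}(\xi-\eta)\,d\eta$ simplifies by the basic identity that multiplication by $e^{i\beta\eta}$ on the Fourier side is translation by $\beta$ on the physical side; applying this to $\widehat{f}$ and then translating in $x$ by $\alpha$ (thanks to the $e^{i\alpha\xi}$ factor) should produce the representation
$$
T_m(f,g)(x) = c' \int \widehat{m}(\alpha,\beta)\, f(x+\alpha+\beta)\, g(x+\alpha)\, d\alpha\,d\beta.
$$

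For assertion (i), I would then apply Minkowski's integral inequality to bring the $L^r_x$ norm inside the $(\alpha,\beta)$-integration, use translation invariance of Lebesgue norms, and finish with Hölder's inequality using $\tfrac{1}{p}+\tfrac{1}{q} = \tfrac{1}{r}$. The bound $\|\widehat{m}\|_1\|f\|_p\|g\|_q$ follows immediately.

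For assertion (ii), my plan is to insert the weight $|x|^w$ into the representation, change variables via $y = x+\alpha$, and split using the elementary bound $|y-\alpha|^w \lesssim |y|^w + |\alpha|^w$ (valid because $w\geq 0$). The $|\alpha|^w$ piece, treated exactly as in (i), produces the first term $\||\alpha|^w\widehat{m}\|_1 \|f\|_p\|g\|_q$. For the $|y|^w$ piece, I would attach the weight to the factor $g(y)$ (using translation invariance of $\|f(\cdot+\beta)\|_p = \|f\|_p$) and apply Hölder, yielding $\|\widehat{m}\|_1 \|f\|_p \||x|^w g\|_q$.

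The whole argument is classical (essentially the Coifman--Meyer trick of viewing a pseudo-product as an average of products of shifts). There is no serious obstacle; the only delicate point is bookkeeping of the Fourier normalization constants and sign conventions so that the translation identities line up correctly. Once the representation of $T_m$ as a superposition of shifted pointwise products is established, everything reduces to routine applications of Minkowski, Hölder, and translation invariance.
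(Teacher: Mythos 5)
Your proposal is correct and rests on the same key observation as the paper: writing $T_m(f,g)(x)$ in physical space as a weighted average of pointwise products of shifts, namely $T_m(f,g)(x) = c\int \widehat{m}(\alpha,\beta)\,f(x+\alpha+\beta)\,g(x+\alpha)\,d\alpha\,d\beta$, which after the change of variables $\alpha=z-x$, $\beta=y-z$ is exactly the paper's kernel formula $\int\!\!\int \widehat{m}(z-x,y-z)f(y)g(z)\,dy\,dz$. The only cosmetic difference is in closing (i): you apply Minkowski in $(\alpha,\beta)$ followed by translation invariance and H\"older, whereas the paper dualizes against $h\in L^{r'}$ and invokes the three-exponent Young inequality $|abc|\le \tfrac1p|a|^p+\tfrac1q|b|^q+\tfrac1{r'}|c|^{r'}$ after normalizing; both are routine once the representation is in hand, and your treatment of (ii) via $|x|^w\lesssim |x+\alpha|^w+|\alpha|^w$ is likewise the same splitting the paper performs in the $(y,z)$ variables.
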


\begin{proof} \underline{Assertion (i).} Translating the definition of $T_m$ in physical space yields
$$
T_m(f,g)(x) = \int \int \mu(z-x,y-z) f(y) g(z)\, dy\,dz
$$
with $\mu = \widehat{m}$. The proposition follows from its dual version
$$
\left| \< T_m(f,g)\,,\,h \> \right| \lesssim \|\mu\|_1 \|f\|_p \|g\|_q \|h\|_{r'}.
$$
It suffices of course to prove this under the assumption that $1 = \|f\|_p = \|g\|_q = \|h\|_{r'}$, which is done using Young's inequality:
\begin{equation*}
\begin{split}
\left| \< T_m(f,g)\,,\,h \> \right| & = \left|\int \int \int \mu(z-x,y-z) f(y) g(z) h(x) \, dy\,dz\,dx \right| \\ 
& \lesssim \left| \int \int  \int |\mu(z-x,y-z)| \left[ |f(y)|^p + |g(z)|^q + |h(x)|^{r'} \right]\,dx\,dy\,dz \right| \lesssim \|\mu\|_1.
\end{split}
\end{equation*}

\medskip
\underline{Assertion (ii).} Follows along the same lines from the inequality
\begin{equation*}
\begin{split}
& |x|^w \left| \int \int \mu(z-x,y-z) f(y) g(z) \,dy\,dz\right| \\
& \;\;\;\;\;\;\;\;\;\;\;\;\;\;\;\;\;\;\;\;\;\;\;\;\;\;\;\;\;\;\;\;\;\;\;\;\;\lesssim \int \int |z-x|^w |\mu(z-x,y-z)| |f(y)| |g(z)| \,dy\,dz \\
& \;\;\;\;\;\;\;\;\;\;\;\;\;\;\;\;\;\;\;\;\;\;\;\;\;\;\;\;\;\;\;\;\;\;\;\;\;\;\;\;\;\;\;\;+ \int \int |\mu(z-x,y-z)| |f(y)|  |z|^w |g(z)| \,dy\,dz.
\end{split}
\end{equation*}
\end{proof}

Since the $L^1$ norm of a function is controlled by the $H^{\frac{3}{2}+\epsilon}$ norm of its Fourier transform, one obtains the

\begin{cor}
\label{loriot}
If $p,q,r$ satisfy $\frac{1}{p}+\frac{1}{q}=\frac{1}{r}$, and if $\epsilon>0$ then
$$
\left\| T_m(f,g) \right\|_r \lesssim \left\|m\right\|_{H^{\frac{3}{2}+\epsilon}} \|f\|_p \|g\|_q.
$$
\end{cor}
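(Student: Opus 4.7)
The argument chains two ingredients: part (i) of Proposition~\ref{oiecendree}, and a Sobolev-type embedding controlling $\|\widehat{m}\|_1$ by a weighted $L^2$ norm of $\widehat{m}$.

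The plan is as follows. First, by part (i) of Proposition~\ref{oiecendree}, one has
$$
\left\|T_m(f,g)\right\|_r \;\lesssim\; \|\widehat{m}\|_1\,\|f\|_p\,\|g\|_q,
$$
so the corollary reduces to the single Fourier-side estimate
$$
\|\widehat{m}\|_1 \;\lesssim\; \|m\|_{H^{\frac{3}{2}+\epsilon}}.
$$

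To prove this estimate, I would use the classical Cauchy-Schwarz/Plancherel trick: insert a Bessel-potential weight and split,
$$
\|\widehat{m}\|_1 \;=\; \int \< y\>^{-\left(\frac{3}{2}+\epsilon\right)} \cdot \<y\>^{\frac{3}{2}+\epsilon}\,|\widehat{m}(y)|\,dy \;\leq\; \bigl\| \<y\>^{-\left(\frac{3}{2}+\epsilon\right)} \bigr\|_2 \;\bigl\| \<y\>^{\frac{3}{2}+\epsilon}\,\widehat{m}\bigr\|_2 .
$$
The first factor is a finite constant depending only on $\epsilon$ --- this is precisely where the positivity of $\epsilon$ is used, since it guarantees the square-integrability of the weight. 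For the second factor, I would note that $\<y\>^{\frac{3}{2}+\epsilon}\,\widehat{m}(y)$ is (up to constants) the Fourier transform of $\<D\>^{\frac{3}{2}+\epsilon} m$, so by the Plancherel isometry it equals $\|m\|_{H^{\frac{3}{2}+\epsilon}}$.

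Combining these two steps yields the stated inequality, and there is no substantive obstacle: the entire argument is a routine Sobolev embedding once the reduction via Proposition~\ref{oiecendree}(i) has been made. The only point requiring mild care is the integrability of the weight $\<y\>^{-\left(\frac{3}{2}+\epsilon\right)}$, which explains the appearance of the exponent $\frac{3}{2}+\epsilon$ (rather than $\frac{3}{2}$) in the statement.
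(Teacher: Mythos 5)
Your proof takes exactly the same route as the paper's one-line justification: reduce via Proposition~\ref{oiecendree}(i) to the embedding $\|\widehat{m}\|_1 \lesssim \|m\|_{H^s}$, and then prove this by Cauchy--Schwarz against a Bessel-potential weight followed by Plancherel. The structure and the ingredients are identical.

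There is, however, one step that does not hold as written, and it is worth flagging because it affects the exponent. The symbol $m$ is a function of $(\xi,\eta)\in\mathbb{R}^3\times\mathbb{R}^3=\mathbb{R}^6$, and $\widehat{m}$ is its \emph{six}-dimensional Fourier transform --- the paper states this explicitly in Proposition~\ref{oiecendree}. With that reading, the weight $\<y\>^{-(3/2+\epsilon)}$ is \emph{not} square-integrable over $\mathbb{R}^6$: one needs $2s>6$, i.e.\ $s>3$, so the Cauchy--Schwarz split should use $\<y\>^{-(3+\epsilon)}$ and the resulting embedding is $\|\widehat{m}\|_{L^1(\mathbb{R}^6)} \lesssim \|m\|_{H^{3+\epsilon}(\mathbb{R}^6)}$. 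The exponent $\frac{3}{2}+\epsilon$ would be correct only if the Fourier transform were taken in $\mathbb{R}^3$. This looks like a slip inherited from the paper's own statement of the corollary (it is harmless where the corollary is used, since in Proposition~\ref{toucan} the constant $A$ is left unspecified), but in your writeup you should state explicitly in which dimension you are integrating, and the exponent should then be $3+\epsilon$.
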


We also need estimates for pseudo-product operators of the following particular kind:

\begin{prop}
\label{geai}
Let $m(\xi,\eta) = \chi \left( \xi -\lambda \eta \right)$. Then
$$
\left\|T_m(f,g)\right\|_r \lesssim \|\widehat{\chi}\|_1 \|f\|_p \|g\|_q \;\;\;\;\;\;\mbox{if $\frac{1}{p}+\frac{1}{q} = \frac{1}{r}$}.
$$
\end{prop}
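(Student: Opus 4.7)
The plan is to reduce the bound to a direct pointwise computation by exploiting that $m(\xi,\eta)=\chi(\xi-\lambda\eta)$ depends only on the $3$-dimensional combination $\xi-\lambda\eta$. First I would write $\chi$ as the inverse Fourier transform of $\widehat{\chi}$, which yields
$$m(\xi,\eta)=\chi(\xi-\lambda\eta)=\frac{1}{(2\pi)^{3/2}}\int \widehat{\chi}(\alpha)\,e^{i\alpha\cdot\xi}e^{-i\lambda\alpha\cdot\eta}\,d\alpha.$$
This expresses $m$ as a continuous superposition of elementary tensor-product symbols $e^{i\alpha\cdot\xi}e^{-i\lambda\alpha\cdot\eta}$, each of which factors cleanly between the $\xi$ and $\eta$ variables appearing in the definition of $T_m$.

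Next I would substitute this representation into $T_m(f,g)$, interchange the order of integration, and recognize that the factor $e^{-i\lambda\alpha\cdot\eta}\widehat{f}(\eta)$ is the Fourier transform of the translate $f(\cdot-\lambda\alpha)$, while the factor $e^{i\alpha\cdot\xi}$ produces a translation of the inverse Fourier transform by $\alpha$. Carrying out the $\eta$-convolution and the $\xi$-inversion then gives an identity of the form
$$T_m(f,g)(x)=c\int \widehat{\chi}(\alpha)\,f\bigl(x+(1-\lambda)\alpha\bigr)\,g(x+\alpha)\,d\alpha,$$
for some harmless constant $c$. From here Minkowski's inequality in the $\alpha$-integral, followed by H\"older in $x$ and the translation invariance of Lebesgue norms, yields
$$\|T_m(f,g)\|_r \leq c \int |\widehat{\chi}(\alpha)|\,\bigl\|f(\cdot+(1-\lambda)\alpha)g(\cdot+\alpha)\bigr\|_r\,d\alpha \lesssim \|\widehat{\chi}\|_1\|f\|_p\|g\|_q,$$
which is the desired bound.

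The only subtlety is that one cannot simply invoke Proposition~\ref{oiecendree}: the $6$-dimensional distribution $\widehat{m}$ associated with $m(\xi,\eta)=\chi(\xi-\lambda\eta)$ is concentrated on the $3$-dimensional subspace $\{\beta=-\lambda\alpha\}$ (essentially $\widehat{\chi}(\alpha)\otimes\delta(\beta+\lambda\alpha)$), so it does not lie in $L^1(\mathbb{R}^6)$ and the hypothesis of that proposition fails. The Fourier-side decomposition sketched above is precisely the device that bypasses this difficulty by treating the symbol as genuinely $3$-dimensional from the start; once that identification is made, the remainder is a routine application of Minkowski and H\"older.
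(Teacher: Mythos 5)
Your proof is correct and follows essentially the same approach as the paper: both express $T_m$ in physical space via $\widehat{\chi}$, exploiting that the symbol is genuinely three-dimensional, and then conclude by elementary inequalities. The only cosmetic difference is that the paper works with the trilinear dual form $\langle T_m(f,g),h\rangle$ and a Young-type trick, whereas you apply Minkowski and H\"older directly to $T_m(f,g)$; the two are interchangeable.
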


\begin{proof}
The dual formulation for $T_m$ can be written as
$$
\< T_m(f,g) \,,\,h\> = \int \int \widehat{\chi}(t) f(y) g(-\lambda t + y) h(-(\lambda+1)t+y) \,dt\,dy.
$$
Proceeding as in the proof of proposition~\ref{oiecendree}, we get under the assumption $\|f\|_p = \|g\|_q = \|h\|_{r'}$ that
$$
\left\| \< T_m(f,g) \,,\,h\> \right\| \lesssim \int |\widehat{\chi}(t)| \left[ |f(y)|^p + |g(-\lambda t + y)|^p +  |h( -(\lambda+1)t+y)|^{r'} \right] \,dt\,dy \lesssim \|\widehat{\chi}\|_1.
$$
\end{proof}

\subsection{Product law}

\begin{lem} If $\frac{1}{p} + \frac{1}{q} = \frac{1}{r}$, $s \geq0$, and $\epsilon>0$, then
\label{heron}
$$\|fg\|_{W^{s,r}} \lesssim \|f\|_{W^{s+\epsilon,p}} \|g\|_q + \|f\|_q \|g\|_{W^{s+\epsilon,p}}.$$
\end{lem}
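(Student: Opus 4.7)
\textbf{Proof plan for Lemma \ref{heron}.}

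The plan is to reduce to a Bony paraproduct decomposition adapted to the Littlewood--Paley projections from Section~\ref{LP}. Write
$$
fg \;=\; T_f g + T_g f + R(f,g), \qquad T_f g = \sum_{j} P_{<j-3} f\cdot P_j g, \qquad R(f,g) = \sum_{|j-k|\leq 2} P_j f\cdot P_k g.
$$
Using the standard inhomogeneous Sobolev characterization
$$
\|u\|_{W^{s,r}} \;\lesssim\; \|P_{<0} u\|_r + \sum_{\ell\geq 0} 2^{s\ell} \|P_\ell u\|_r,
$$
(valid for $1<r<\infty$), it suffices to bound each of $T_f g$, $T_g f$, $R(f,g)$ separately; the low-frequency piece $P_{<0}(fg)$ is controlled directly by H\"older's inequality.

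For the paraproduct $T_f g$, the summand $P_{<j-3}f\cdot P_j g$ has Fourier support in an annulus of size $\sim 2^j$, so $P_\ell T_f g$ receives contributions only from $j$ in a bounded window around $\ell$. H\"older with the pairing $(L^q, L^p)$ and the uniform bound $\|P_{<\ell}f\|_q\lesssim \|f\|_q$ give
$$
\|P_\ell T_f g\|_r \;\lesssim\; \|f\|_q \|P_\ell g\|_p.
$$
Summing $2^{s\ell}\|P_\ell T_f g\|_r$ in $\ell$ and using the Littlewood--Paley square function bound yields $\|T_f g\|_{W^{s,r}} \lesssim \|f\|_q \|g\|_{W^{s,p}}$, which is controlled by the second term on the right-hand side (the $\epsilon$ gain is unused here). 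The symmetric estimate $\|T_g f\|_{W^{s,r}} \lesssim \|g\|_q \|f\|_{W^{s,p}}$ is obtained by interchanging the roles of $f$ and $g$.

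The main obstacle is the diagonal high-high remainder $R(f,g) = \sum_j P_j f\cdot \widetilde{P}_j g$ (with $\widetilde{P}_j = \sum_{|k-j|\leq 2} P_k$), because $P_j f \cdot \widetilde{P}_j g$ has frequencies up to $2^{j+C}$ and therefore $P_\ell R(f,g)$ receives contributions from \emph{all} $j\geq \ell - C$. Here the $\epsilon$ loss in derivatives is essential: H\"older with pairing $(L^p, L^q)$ combined with $\|P_j f\|_p \lesssim 2^{-(s+\epsilon)j}\|f\|_{W^{s+\epsilon,p}}$ gives
$$
\|P_\ell R(f,g)\|_r \;\lesssim\; \sum_{j\geq \ell-C} \|P_j f\|_p \|\widetilde{P}_j g\|_q \;\lesssim\; \|f\|_{W^{s+\epsilon,p}} \|g\|_q \sum_{j\geq \ell-C} 2^{-(s+\epsilon)j} \;\lesssim\; 2^{-(s+\epsilon)\ell} \|f\|_{W^{s+\epsilon,p}} \|g\|_q.
$$
Multiplying by $2^{s\ell}$ leaves a geometric factor $2^{-\epsilon\ell}$ which is summable in $\ell\geq 0$, producing the bound $\|R(f,g)\|_{W^{s,r}} \lesssim \|f\|_{W^{s+\epsilon,p}}\|g\|_q$. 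Repeating the argument with derivatives placed on $g$ instead (pairing $(L^q,L^p)$) produces the symmetric companion bound $\lesssim \|f\|_q \|g\|_{W^{s+\epsilon,p}}$, and combining all estimates completes the proof. The only genuine difficulty is the high-high term: without the arbitrarily small gain $\epsilon$, the geometric tail $\sum_{j\geq \ell} 2^{-sj}$ degenerates to a divergent harmonic series at the critical regularity, which is precisely why any positive $\epsilon$ suffices.
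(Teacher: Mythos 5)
Your proof follows exactly the route the paper indicates (Bony paraproduct decomposition); the paper itself omits the details, calling the argument classical, and your write-up is a correct execution of it. One small imprecision worth flagging: in the low-high paraproduct $T_fg$ you claim $\|T_f g\|_{W^{s,r}} \lesssim \|f\|_q \|g\|_{W^{s,p}}$ after "summing $2^{s\ell}\|P_\ell T_f g\|_r$ in $\ell$ and using the Littlewood--Paley square function bound," but the $\ell^1$-in-$\ell$ sum produces the Besov norm $\|g\|_{B^s_{p,1}}$, which is not controlled by $\|g\|_{W^{s,p}}$; to get the clean $W^{s,p}$ bound one must instead estimate the square function $\bigl\|\bigl(\sum_\ell 4^{s\ell}|P_\ell T_f g|^2\bigr)^{1/2}\bigr\|_r$ directly via H\"older and the Fefferman--Stein vector-valued maximal inequality. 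In the present setting this is moot: the $\epsilon$ of regularity you carry already gives $\sum_\ell 2^{s\ell}\|P_\ell g\|_p \lesssim \|g\|_{W^{s+\epsilon,p}}$ by the same geometric-tail argument you use for the high-high piece, so the linear sum suffices and the lemma holds as stated.
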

The proof follows for instance from the paraproduct decomposition; it is very classical, so it will not be included here.

\appendix

\section{Precise study of the resonances}

In this section, we first present the proofs of the lemmas~\ref{aigle1} and~\ref{aigle2}; then, thanks to a numerical computation, we describe the space-time resonant set if $c=5$.

\subsection{Preliminaries}
\label{subsecprel}
Recall the definition of the phases:
$$
\phi^{k,\ell, m}_{\epsilon_0,\epsilon_1,\epsilon_2}(\xi,\eta) \overset{def}{=} \epsilon_0 \< \xi \>_k - \epsilon_1 \< \eta \>_\ell - \epsilon_2 \<\xi-\eta\>_m.
$$
and that (dropping in the following line all indices)
$$
\mathcal{T} = \{ \phi = 0 \}\;\;\;,\;\;\;\mathcal{S} = \{ \partial_\eta \phi =0 \}\;\;\;\mbox{and}\;\;\;\mathcal{R} = \mathcal{S} \cap \mathcal{T}.
$$
In order to establish lemmas~\ref{aigle1} and~\ref{aigle2}, one has to consider a lot of particular cases; but they are all similar and elementary, so we do not detail all of them, and instead only treat one.

Notice that it is actually possible to reduce the number of combinations of the indices to be examined by observing that $\eta$ and $\xi-\eta$ play symmetric roles; and that turning each of the $\epsilon_i$ to its opposite simply turns $\phi$ into $-\phi$.

\subsection{Proof of Lemma~\ref{aigle1}}

\label{secprolem}

We explain it in the case where $c \geq 1$, and for the set $\mathcal{R}^{c1c}_{+--}$, the other possibilities being very similar.

The space-resonant set, $\mathcal{S}^{c1c}_{+--}$, is given by the frequencies $(\xi,\eta)$ such that
$$
0 = \partial_\eta \phi^{c1c}_{+--}(\xi,\eta) = - \frac{\eta}{\<\eta\>} - \frac{c^2(\eta-\xi)}{\<\eta-\xi\>_c}.
$$
This implies that $\eta$ and $\xi-\eta$ are positively colinear, which can be written (so as to be consistent with previous notations) $\xi-\eta = (\lambda-1) \eta$ or $\xi = \lambda \eta$, for a real number $\lambda \geq 1$. A simple computation gives
\begin{equation*}
\lambda(|\eta|) = 1 + \frac{1}{\sqrt{(c^4-c^2)|\eta|^2 + c^4}}.
\end{equation*}
Therefore, setting, for $(r,\omega) \in \mathbb{R}^+ \times \mathbb{S}^{2}$,
$$
p(r,\omega) = (\lambda(r) r \omega\,,\,r \omega) \in \mathbb{R}^6,
$$
$\mathcal{S}^{c1c}_{+--}$ can be parameterized by
$$
\mathcal{S}^{c1c}_{+--} = \{ p(r,\omega)\;,\;\mbox{with $(r,\omega) \in \mathbb{R}^+ \times \mathbb{S}^2$} \}.
$$
A point of $\mathcal{S}^{c1c}_{+--}$, parameterized as above, will belong to $\mathcal{T}^{c1c}_{+--}$, and thus to $\mathcal{R}^{c1c}_{+--}$, if
$$
Z(r) \overset{def}{=} \phi^{c1c}_{+--}(p(r,\omega)) = (\< \lambda(r) r \>_c - \< r \> - \< (\lambda(r)-1)r \>_c = 0.
$$
The function of $r$ on the above right-hand side is analytic and has, as can easily be checked, a non zero limit as $r$ goes to infinity. Therefore its zeroes form a finite set, and each one is of finite order. Suppose that $r_0$ is one of them. 

On the one hand, in a neighbourhood of the component of $\mathcal{R}$ corresponding to $r_0$,
\begin{equation}
\label{petrel1}
\operatorname{dist}(p(r),\mathcal{R}) \sim |r-r_0|.
\end{equation}
On the other hand, $r_0$ is a zero of $Z$ of finite order, thus $|Z(r)| \sim |r-r_0|^m$, for some integer $m$. Since $\phi$ does not vanish to infinite order on $\mathcal{T}$, this implies
\begin{equation}
\label{petrel2}
\operatorname{dist}(p(r),\mathcal{T}) \sim |r-r_0|^{n}.
\end{equation}
for an integer $n$. Combining~(\ref{petrel1}) and~(\ref{petrel2}) gives
\begin{equation}
\label{sittelle}
\operatorname{dist}(p(r),\mathcal{T}) \gtrsim \operatorname{dist}(p(r),\mathcal{R})^{n}
\end{equation}
which is the finite order intersection property.

\subsection{Proof of Lemma~\ref{aigle2}}
\subsubsection{Low frequencies}

In this subsection, we ignore high frequencies and define $\chi_\mathcal{S}^\rho(\xi,\eta)$ and $\chi_\mathcal{S}^\rho(\xi,\eta)$ for $|(\xi,\eta)|\leq M$.

It suffices to do so in the neighbourhood of one of the components of $\mathcal{R}$; it is of the type $\{\xi = \lambda \eta\,,\,|\eta|=R \}$ for real numbers $\lambda,R$. Define
$$
\chi_{\mathcal{R}}^\rho(\xi,\eta) \overset{def}{=} \chi_1 \left( \frac{|\eta|-R}{\rho} \right) \chi_1 \left( \frac{\xi - \lambda \eta}{\rho} \right),
$$
where $\chi_1$ is valued in $[0,1]$, compactly supported, smooth, and equal to $1$ in a neighbourhood of zero. Next define 
$$
\chi_{\mathcal{S}}^\rho (\xi,\eta) \overset{def}{=} \left[ 1 - \chi_{\mathcal{R}}^\rho(\xi,\eta) \right] \chi_2 \left( \frac{ \operatorname{dist}((\xi,\eta),\mathcal{T}) - \operatorname{dist}((\xi,\eta),\mathcal{S})}{\operatorname{dist} ((\xi,\eta),\mathcal{R})^{n+1}} \right)
$$
where $\chi_2$ is valued in $[0,1]$, equal to $0$ on $(-\infty,-1)$ and $1$ on $(1,\infty)$, and where $n$ is the constant appearing in~(\ref{sittelle}). Finally,
$$
\chi_{\mathcal{T}}^\rho (\xi,\eta) \overset{def}{=} 1 - \chi_{\mathcal{R}}^\rho(\xi,\eta) - \chi_{\mathcal{S}}^\rho (\xi,\eta).
$$
The inequality~(\ref{rossignol1}) follows from the above formulas, the finite order intersection property, and the fact that $\phi$ as well as $\partial_\xi \phi$ vanish to finite order on their respective zero sets.

\subsubsection{High frequencies}

In this subsection, we show how to define $\chi_\mathcal{S}^\rho(\xi,\eta)$ and $\chi_\mathcal{T}^\rho(\xi,\eta)$ for $|(\xi,\eta)|\geq M$; recall that $M$ is so large that no space-time resonances belong to this range. 

The problem we are facing occurs when $(\xi,\eta) \rightarrow \infty$ so it suffices to define $\chi_\mathcal{S}^\rho(\xi,\eta)$ and $\chi_\mathcal{T}^\rho(\xi,\eta)$ for $|(\xi,\eta)|$ very large; but then, the index $\rho$ does not have any importance, so we forget about it.

One has to consider all the possible combinations of indices $\epsilon_0,\epsilon_1,\epsilon_2,k,l,m$. Since all these cases can be treated in a similar fashion, we only illustrate here the example of $\phi_{+--}^{11c}$ when $c>1$. As in Subsection~\ref{secprolem}, we find that
$$
\mathcal{S}_{+--}^{11c} = \{ p(r,\omega)\;,\;\mbox{with $(r,\omega) \in \mathbb{R}^+ \times \mathbb{S}^2$} \}
$$
with $p(r,\omega) = (\lambda(r) r \omega\,,\,r \omega)$ and $\lambda(r) = 1 + \frac{1}{\sqrt{(c^4-c^2)r^2 + c^4}}$. A simple limit computation gives that, as $r \rightarrow \infty$,
\begin{equation}
\label{balbuzard}
p(r,\omega) = \left( \left[ r + \frac{1}{\sqrt{c^4-c^2}} \right]\omega\,,\,r\omega \right) + O \left(\frac{1}{r^3}\right)\;\;\;\;\mbox{and}\;\;\;\;\phi_{+--}^{11c}(p(r,\omega)) = C_0 + O \left(\frac{1}{r}\right)
\end{equation}
for a certain constant $C_0$. Define
$$
\chi_\mathcal{S} (\xi,\eta) \overset{def}{=} \chi_3\left(|\xi-\eta| - \frac{1}{\sqrt{c^4-c^2}} \right),
$$
where $\chi_3$ is smooth, supported on a sufficiently small neighbourhood of 0, and equal to 1 near 0.  Since $\partial_{\xi,\eta} \phi$ is bounded, the property~(\ref{balbuzard}) implies that, for $(\xi,\eta)$ large, $|\phi| \gtrsim 1$ on the support of $\chi_\mathcal{S}$. This gives the inequality~(\ref{rossignol2}) for $\frac{\chi_\mathcal{S}}{\phi}$.

Finally, set
$$
\chi_\mathcal{T} (\xi,\eta) \overset{def}{=} 1 - \chi_\mathcal{S} (\xi,\eta).
$$
It can be easily seen that $\partial_\eta \phi \gtrsim 1$ on the support of $\chi_\mathcal{S}$. This gives the inequality~(\ref{rossignol2}) for $\frac{\chi_\mathcal{T} \partial_\eta \phi}{|\partial_\eta \phi|^2}$.

\subsection{Numerical computation for $c=5$}

We computed numerically the space time resonances in case $c=5$. The main interest of this computation is to give a practical example where space-time resonances occur, and are separated.

As showed in Subsection~\ref{secprolem}, finding space-time resonances reduces to finding the zeroes of an explicit real-valued function of a real variable; doing this numerically is very simple.

The results are as follows
\begin{itemize}
\item Space-time resonances only occur for the phases $\phi^{c11}_{+--}$, $\phi^{cc1}_{+--}$, and all the phases obtained from these two by the symmetries signaled in Subsection~\ref{subsecprel}. 
\item Outcome frequencies of space-time resonances are the frequencies $\xi$ such that $|\xi| = 0.3535533906\dots$ or $0.3603654667\dots$.
\item Source frequencies of space-time resonances are the frequencies $\xi$ such that $|\xi| = 0.01314860997\dots$, $0.1767766953\dots$, or $0.3472168567\dots$.
\end{itemize}

\bigskip
\noindent
{\bf Acknowledgements:} The author is grateful to Nader Masmoudi for suggesting this question to him and for early discussions.

\end{document}